\newtheorem{theorem}{Theorem}
\newtheorem{definition}[theorem]{Definition}
\newtheorem{lemma}[theorem]{Lemma}
\newtheorem{corollary}[theorem]{Corollary}
\newtheorem{proposition}[theorem]{Proposition}
\newtheorem{remark}[theorem]{Remark}
\def\R{{\mathbb R}}
\def\H{{\mathcal H}}
\def\eps{{\epsilon}}
\def\ga{{\gamma}}
\def\diam{{\mbox{diam}}}
\begin{document}

\title[Tangents of $\sigma$-finite curves]{Tangents of $\sigma$-finite curves and scaled oscillation}

\author{Marianna Cs\"ornyei and Bobby Wilson}
\address{Department of Mathematics, The University of Chicago, 5734 South University Avenue, Chicago, IL 60615, U.S.A.}
\thanks{}

\subjclass[2010]{26A24, 28A15}

\keywords{Differentiability, Rademacher's Theorem, Lipschitz, Scaled Oscillation, Luzin's condition, $\sigma$-finite Curves}

\begin{abstract}
We show that every continuous simple curve with $\sigma$-finite length has a tangent at positively many points.  We also apply this result to functions with finite lower scaled oscillation; and study the validity of the results in higher dimension.
\end{abstract}

\maketitle

\section{Introduction}
Consider a continuous function $f :\Omega\subset \mathbb{R}^n \rightarrow \mathbb{R}$. We begin with the well-known notion of Lipschitz continuity and Rademacher's Theorem which states that if $f$ is Lipschitz continuous on the domain $\Omega$, then $f$ is differentiable almost everywhere on $\Omega$.   A generalization of Rademacher's Theorem, known as Stepanov's theorem, states that the conclusion of Rademacher's theorem holds under the assumption that the upper scaled oscillation, (see Def. \ref{osc}), is finite for almost every point in $\Omega$.  

A natural question to ask is whether or not the assumptions of Stepanov's theorem can be reduced to assuming the lower scaled oscillation is finite for almost all $x \in \Omega$ while preserving the conclusions. Balogh and Cs\"ornyei \cite{BaCs06} proved that one cannot simply replace upper scaled oscillation with lower scaled oscillation and acquire the same results.  Indeed, for $n=1$, they constructed a nowhere differentiable, continuous function $f: [0,1] \rightarrow \mathbb{R}$ such that $l_f(x)=0$ for a.e. $x \in [0,1]$. However, they were able to show that if the lower scaled oscillation is finite on all but countably many points of $[0,1]$, then $f$ is always differentiable on a set of positive measure, though it is not necessarily differentiable almost everywhere. 

The results of our paper were initiated by the observation (see Lemma \ref{prop1}) that the graph of such $f$ has $\sigma$-finite $\mathcal{H}^1$ measure and $f$ satisfies Luzin's $(N)$ condition.  This naturally leads to the question whether every curve of $\sigma$-finite length has a tangent at every point of a set of positive $\mathcal{H}^1$ measure.



It is well-known that every curve of finite length has a
tangent at a.e. of its points. Therefore the answer to this question is positive when the curve can be decomposed into countably many sub-arcs, each of which has finite length. However, there are curves of $\sigma$-finite $\mathcal{H}^1$ measure whose every sub-arc is of infinite length.

Every set $E$ of $\sigma$-finite $\mathcal{H}^1$ measure can be decomposed into a rectifiable and a purely unrectifiable part. The tangent of a rectifiable set is well-understood, for a detailed discussion see e.g. Chapter 15 in \cite{Ma}. The purely unrectifiable part has zero projection in a.e. direction, therefore, if $E$ is a curve, then its rectifiable part cannot be negligible, so the tangent of $E$ is defined at positively many points. Besicovitch discusses the special case when the set is a simple curve in \cite{Be56} and \cite{Be57}.
The definition of tangent was later generalized to an arbitrary Lebesgue null set in \cite{ACP}.

Both the standard definition of the tangents of rectifiable sets and the definition in \cite{ACP} for Lebesgue null sets are given in the ``almost everywhere" sense: if we change the tangent field on a sufficiently small set (namely, on a purely unrectifiable set), it is still a tangent field. Therefore these definitions cannot say what the tangent of the set is at a given point. In \cite{Be57}, Besicovitch showed that there exists a continuous function defined on $[0,1]$ with a $\sigma$-finite graph and for which there exists a set $E \subset [0,1]$, with $\mathcal{L}^1(E)=\frac{1}{2}$, such that $f$ is not differentiable on $E$. He also  asserted with his demonstration of this example that ``...it is hardly possible to find a satisfactory definition of a tangent to a curve of $\sigma$-finite length at a point considered individually."

However, in this paper we show that every $\sigma$-finite curve has a tangent, in the pointwise sense, on a set of positive measure. By ``tangent" we mean the classical notion: the limit direction from which the curve approaches the point (see Section \ref{prelims}). We prove the following theorem:

	\begin{theorem} \label{main}
	Let $C$ be a closed, continuous simple curve in $\R^n$. Assume that $C$ has  $\sigma$-finite $\mathcal{H}^1$ measure.
Then $C$ has a tangent at positively many of its points.
	\end{theorem}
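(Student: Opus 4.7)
The plan is to combine a projection argument -- which produces a rectifiable subset of $C$ of positive $\mathcal{H}^1$-measure -- with a density and simplicity argument that promotes the approximate tangent on that subset to a genuine pointwise tangent of $C$ at positively many of its points. Throughout I parametrize $C$ by a continuous injection $\gamma : I \to \R^n$, where $I = [0,1]$ or $I = S^1$.

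Since $\mathcal{H}^1(C)$ is $\sigma$-finite, $C$ admits the standard decomposition $C = R \cup U$ with $R$ countably $1$-rectifiable and $U$ purely $1$-unrectifiable. By Besicovitch's projection theorem, $\mathcal{L}^1(\pi_e(U)) = 0$ for $\mathcal{H}^{n-1}$-a.e.\ direction $e \in S^{n-1}$. On the other hand, $C$ is a nondegenerate continuum, so for an open set of directions $e$ the projection $\pi_e(C)$ is a nondegenerate interval, in particular of positive $\mathcal{L}^1$-measure. Choosing $e$ satisfying both conditions yields $\mathcal{L}^1(\pi_e(R)) > 0$ and hence $\mathcal{H}^1(R) > 0$. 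The classical structure theory of rectifiable sets (see Mattila, Ch.\ 15) then provides an approximate tangent direction $v(p) \in S^{n-1}$ at $\mathcal{H}^1$-a.e.\ $p \in R$, together with density one of $R$ at $p$.

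The main step is to upgrade this approximate tangent on $R$ to a classical pointwise tangent of the entire curve $C$ on a positive-measure subset of $R$. I would first localize to a piece $C_k$ of the $\sigma$-finite decomposition with $\mathcal{H}^1(C_k) < \infty$ and restrict attention to good points in $R_k := R \cap C_k$. For $\eta > 0$, define $E_\eta := \{p \in R_k : \text{for every } r > 0 \text{ there is } q \in C \cap B(p,r) \text{ with } \angle(q-p, \pm v(p)) > \eta\}$. If $\mathcal{H}^1(E_\eta) > 0$, then at each $p \in E_\eta$ one extracts a sequence $q_n \in U$ with $q_n \to p$ in directions bounded away from $\pm v(p)$; by continuity of $\gamma$, the short arc of $C$ joining $p$ to $q_n$ lies in a small ball about $p$ and contributes $\mathcal{H}^1$-mass at least $|p - q_n|$. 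A Vitali-type covering argument, combined with the density-one property of $R_k$ at $p$, should then force the local $\mathcal{H}^1$-density of $C_k$ at $p$ to exceed what rectifiability allows, yielding $\mathcal{H}^1(E_\eta) = 0$ for every $\eta$ and hence a classical tangent at $\mathcal{H}^1$-a.e.\ good point of $R_k$.

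The main obstacle is precisely this last step. The $\sigma$-finite hypothesis is only global, so restricting to a single piece $C_k$ gives local $\mathcal{H}^1$-control only of $C_k$, while classical tangency involves all of $C$, including points of $C \setminus C_k$ that may be arbitrarily close to $p$. Making the density/covering argument sharp enough to contradict the density-one behaviour of $R_k$ -- using only the simplicity of $\gamma$ and the projection-theorem behaviour of $U$ -- is what I expect to demand the most delicate work, and it is plausible that the argument must be iterated simultaneously over all pieces $C_k$ through a point of common density one.
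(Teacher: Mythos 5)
Your first step (positive $\mathcal{H}^1$-measure of the rectifiable part via Besicovitch--Federer projection) is fine, and it is exactly the observation the paper itself makes in the introduction; but the step you yourself flag as the ``main obstacle'' is not a technical difficulty to be ironed out --- it is a genuine gap, and in fact the intermediate statement you aim for is false. You want to show $\mathcal{H}^1(E_\eta)=0$, i.e.\ that $C$ has a classical tangent at $\mathcal{H}^1$-a.e.\ density-one point of a fixed finite-measure rectifiable piece $R_k$. The contradiction you hope for cannot materialize: the arcs of $C$ joining $p$ to the bad points $q_n$ need not lie in $C_k$ (nor in $U$ --- there is no reason for $q_n\in U$ either), and $\mathcal{H}^1\llcorner C$ may be locally infinite at every point of $R_k$ without contradicting anything, since the $\sigma$-finite hypothesis gives no local bound. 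A concrete counterexample to your claimed conclusion: let $K\subset[0,1]$ be a fat Cantor set whose level-$n$ gaps have length $4^{-n}$, and let $f$ be continuous, zero on $K$, with a single spike of height $2^{-n}$ on each level-$n$ gap. The graph is a simple curve of $\sigma$-finite length (finite over $K$, finite over each gap), the piece $C_k=K\times\{0\}$ is rectifiable with horizontal approximate tangent and density one at a.e.\ point, yet at every two-sided accumulation point of $K$ the contingent contains both the horizontal direction and directions of slope bounded below by a constant (the nearest level-$n$ spike top sits at height $2^{-n}$ within horizontal distance $\lesssim 2^{-n}$), so no classical tangent exists at a.e.\ point of $C_k$. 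The tangents guaranteed by the theorem live elsewhere on the curve (here, on the spikes), so no density argument run inside a fixed rectifiable piece can locate them; iterating over all pieces $C_k$ does not help, because the tail $\bigcup_{j>k}C_j$ is uncontrolled at every scale.

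The paper's proof takes a different route precisely to avoid this. Using Falconer's fiber inequality (Lemma \ref{proj}) applied to a finite-measure piece whose projection has positive length, one finds, for each of three directions $\theta_i$, a set $X_{\theta_i}\subset C$ of positive $\mathcal{H}^1$-measure consisting of points that are \emph{isolated} in the corresponding line section $C\cap F^{\theta_i}_z$ of the whole curve; on a positive-measure intersection $D=\bigcap_i X_{\theta_i}$, simplicity and continuity force the curve to enter at most two of the six sectors cut out by the three lines through $z$, so the contingent is neither the plane nor a half-plane, and Saks' contingent theorem (Lemma \ref{3.6}) then yields a tangent at a.e.\ point of $D$ (with projections to $2$-planes handling $n\ge 3$). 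Note that this yields tangency only at positively many points, not a.e.\ on the rectifiable part --- consistent with the counterexample above, and with Besicovitch's example quoted in the introduction.
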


We also show how this theorem can be used to give a very quick proof of the result of \cite{BaCs06} in dimension $n=1$ mentioned above, and we also extend it to higher dimension. 

In \cite{BaCs06}, it was proved that a function $f: [0,1]^n \rightarrow \mathbb{R}$ is differentiable almost everywhere, provided that $l_f(x)<\infty$ except at the points of a set $E$ with $\sigma$-finite ${\mathcal H}^{n-1}$ measure, and $l_f\in L^p$ where $p>n$. Hanson showed in \cite{Han} that one cannot weaken the hypotheses that $E$ has $\sigma$-finite $(n-1)$-dimensional measure: he gave a sharp estimate for the gauge functions $g$ for which the theorem remains true with ${\mathcal H}^{n-1}$ replaced with ${\mathcal H}^g$. In particular, he showed that $E$ can have Hausdorff dimension at most $n-1$.

In view of the positive results for $n=1$ mentioned at the beginning of this introduction, it is natural to conjecture that, though a function $f: [0,1]^n \rightarrow \mathbb{R}$ may not be differentiable almost everywhere, it is still differentiable on a set of positive measure, provided that $l_f(x)<\infty$ outside a set of  $\sigma$-finite ${\mathcal H}^{n-1}$ measure. Here we do not assume that $l_f\in L^p$.

Surprisingly, we show that this conjecture is false for $n>1$. Theorem \ref{main} holds true only in dimension one, higher dimensional $\sigma$-finite surfaces do not necessarily have tangents at any points. Through this observation we can construct non-differentiable functions for {\it any gauge function $g$}. We prove the following theorem:

\begin{theorem} \label{main2}
	For any $n\ge 1$ and for any non-zero gauge function $g$ there exists an almost everywhere non-differentiable continuous function $f: [0,1]^n \rightarrow \mathbb{R}$ with $l_f(x)<\infty$ at all points in $[0,1]^n$ except on a set $E$ with $\mathcal{H}^g(E)=0$.
	\end{theorem}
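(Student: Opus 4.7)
For the given gauge $g$, I would build a continuous function $f:[0,1]^n\to\R$ as a telescoping sum $f=\sum_k f_k$ of piecewise-affine ``bumps'' whose supports approximate a Cantor-type skeleton $E$ adapted to $g$. The skeleton $E$ will be constructed so that $\mathcal{H}^g(E)=0$, and the bumps will be tuned so that $f$ is continuous, $l_f<\infty$ on $[0,1]^n\setminus E$, and $f$ is a.e.\ non-differentiable. The construction mimics the one-dimensional Balogh-Cs\"ornyei counterexample but replaces its exceptional set with a more carefully designed object whose $\mathcal{H}^g$-measure can be made zero for any prescribed gauge.

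In detail, I would set $E=\bigcap_k\bigcup_{Q\in\mathcal{F}_k}Q$, where $\mathcal{F}_k$ is a finite family of disjoint closed $n$-cubes of side $\ell_k$, with $\mathcal{F}_k$ nested inside $\mathcal{F}_{k-1}$ and each cube of $\mathcal{F}_{k-1}$ refined by several smaller cubes of $\mathcal{F}_k$ spread throughout it. At each stage I would choose $\ell_k$ small enough that $|\mathcal{F}_k|\,g(\sqrt{n}\,\ell_k)<2^{-k}$; this is always possible since $g$ is a non-zero gauge with $g(0^+)=0$, and it forces $\mathcal{H}^g(E)=0$. The bumps $f_k$ are piecewise-affine, supported in an $\ell_k$-neighborhood of $\bigcup\mathcal{F}_k$, with amplitude $a_k$ summable (so $f$ is continuous) and slope $s_k=a_k/\ell_k\to\infty$ (so each stage introduces steep local oscillation). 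For $x\notin E$, there is some stage $k_0=k_0(x)$ after which $x$ is at positive distance from $\bigcup\mathcal{F}_k$; for radii $r$ below that distance only the Lipschitz terms $f_1,\ldots,f_{k_0-1}$ affect $B(x,r)$ and so $l_f(x)\le\sum_{k<k_0}s_k<\infty$. To force $f$ to be a.e.\ non-differentiable, one arranges the cubes so that at almost every $x$ there is a sequence of scales $r_k\asymp\ell_k$ on which $\mathrm{osc}(f;B(x,r_k))/r_k\ge c\,s_k\to\infty$, precluding any linear approximation of $f$ at $x$.

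\textbf{Main obstacle.} The central difficulty is the joint tuning, against an arbitrary gauge $g$, of the three parameter sequences $\ell_k$, $|\mathcal{F}_k|$, $s_k$: $E$ must be $\mathcal{H}^g$-small (limiting $|\mathcal{F}_k|$ relative to $\ell_k$) while the cubes must be placed densely enough that a.e.\ point is close to $\bigcup\mathcal{F}_k$ for infinitely many $k$ (required for a.e.\ non-differentiability), and the amplitudes must remain summable. In dimension $n=1$, the Balogh-Cs\"ornyei positive result further forces $E$ to be uncountable---this is why a Cantor structure is necessary rather than a countable set. In dimensions $n\ge 2$, no analogous positive result holds; this failure is itself part of the content of Theorem~\ref{main2}, in surprising contrast to Theorem~\ref{main}, and it is precisely the extra geometric freedom afforded by $n$-dimensional bumps that allows the parameter balance to be met for arbitrarily slow gauges.
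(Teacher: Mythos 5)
There is a genuine gap, and it sits at the heart of the theorem. Your own argument for $l_f(x)<\infty$ on $[0,1]^n\setminus E$ proves much more than finiteness of $l_f$: for every $x\notin E$ you exhibit a ball $B(x,\rho)$ on which all but finitely many of the piecewise-affine bumps $f_k$ vanish, so $f$ restricted to $B(x,\rho)$ is a finite sum of Lipschitz functions, hence Lipschitz. Thus $f$ is locally Lipschitz on the complement of $E$. In the cases that carry the content of Theorem \ref{main2} (gauges $g$ decaying at least as slowly as $t^n$, and in particular arbitrarily slow gauges, for which $\mathcal{H}^g(E)=0$ forces $E$ to be Lebesgue null, indeed of Hausdorff dimension $0$), Rademacher's theorem then makes $f$ differentiable almost everywhere --- the opposite of what is required. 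Equivalently, by Stepanov (Corollary \ref{lf}), almost everywhere non-differentiability forces $L_f(x)=\infty$ at a.e.\ $x$, so almost every point must feel the steep oscillations (slopes $s_k\to\infty$) at infinitely many scales; this is incompatible with the mechanism ``after stage $k_0$ only $f_1,\dots,f_{k_0-1}$ affect a neighborhood of $x$.'' Your ``main obstacle'' paragraph in fact demands exactly this density (a.e.\ $x$ within $\asymp\ell_k$ of $\bigcup\mathcal{F}_k$ for infinitely many $k$), but for such $x$ your finiteness argument for $l_f$ no longer applies, and the proposal gives no replacement: one would need, at every such point, a \emph{second} sequence of scales along which $\mathrm{osc}(f;B(x,r))=O(r)$ even though unbounded slopes act arbitrarily close by, and a telescoping sum of bumps with $s_k=a_k/\ell_k\to\infty$ supported near a $g$-null Cantor skeleton does not obviously provide it. So the construction, as described, either fails a.e.\ non-differentiability or fails $l_f<\infty$ off $E$; reconciling the two is precisely the missing idea.

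For comparison, the paper resolves this tension by not summing scalar bumps at all. It builds an $n$-dimensional surface $S\subset\R^{n+1}$ of finite $\mathcal{H}^n$ measure (Proposition \ref{propp}) by repeatedly replacing each square $Q$ of a face by a very flat pyramid of angle $\alpha(Q)$ (with $\sum\alpha(Q)<\infty$) carrying a thin column of height $\ell(Q)$ over a tiny central square $Q^*$. The columns, whose height is comparable to the scale but whose cross-section $\ell(Q^*)$ is negligible, make $S$ non-flat at every point and every scale, so the natural homeomorphism $h:[0,1]^n\to S$ (which satisfies $\|x-y\|\lesssim\|h(x)-h(y)\|$) is differentiable nowhere; yet only the points whose images land in the column-preimages $Q^{**}$ at infinitely many stages can have $l_h=\infty$, and choosing $\ell(Q^*)$ small against $g$ makes that exceptional set $\mathcal{H}^g$-null, while for all other points the oscillation of $h$ on balls of radii $\asymp\ell(Q_k)$ is $\lesssim\ell(Q_k)$, giving $l_h<\infty$. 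The scalar function is then obtained indirectly as $f=v\cdot h$ for a suitable unit vector $v$: if $v\cdot h$ were differentiable on a set of positive measure for $n+1$ independent directions $v$, then $h$ would be differentiable somewhere. This decoupling --- steep features of tiny cross-section that every point sees geometrically at every scale without its own difference quotients $|f(y)-f(x)|/r$ having to blow up there, plus the pigeonhole over directions to descend to a real-valued $f$ --- is what your bump-sum approach lacks, and without it the parameter tuning you describe cannot be carried out.
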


However, for $\H^0$, we obtain a positive result in any dimension. This answers a question of Hanson.

\begin{theorem} \label{fdiffalld}
	Let $f:[0,1]^n\rightarrow \mathbb{R}$ be a continuous function. Assume $l_f(x)<\infty$ for all but at most countably many $x \in [0,1]^n$.  Then $f$ is differentiable at the points of a set of positive measure.
	\end{theorem}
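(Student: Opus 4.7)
\emph{Proof plan.} My plan is to prove Theorem \ref{fdiffalld} by induction on $n$, reducing to the one-dimensional case of \cite{BaCs06}, which in turn follows directly from Lemma \ref{prop1} and Theorem \ref{main} as indicated in the introduction. The decisive feature of the hypothesis is that the exceptional set $E$ is countable: for any fixed direction in $\R^n$, $\mathcal{L}^{n-1}$-almost every affine line in that direction is entirely disjoint from $E$, so the one-dimensional theorem applies on each such slice without modification.

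For the base case $n=1$, Lemma \ref{prop1} shows that the graph of $f$ has $\sigma$-finite $\mathcal{H}^1$ measure, and Theorem \ref{main} then yields a tangent at positively many points of the graph; the finiteness of $l_f$ off a countable set rules out vertical tangents except at countably many abscissae, so $f$ is differentiable on a set of positive $\mathcal{L}^1$ measure. For the inductive step from $n$ to $n+1$, I would slice $[0,1]^{n+1}$ in two complementary ways. First, along lines $\ell_y=\{y\}\times[0,1]$ parallel to $e_{n+1}$: for $\mathcal{L}^n$-almost every $y$ the line $\ell_y$ avoids $E$, and the trivial inequality $l_{f|_{\ell_y}}\le l_f$ combined with the $n=1$ case gives $\partial_{n+1}f$ on a subset of $\ell_y$ of positive $\mathcal{L}^1$ measure; Fubini then produces a set $A\subset[0,1]^{n+1}$ of positive $\mathcal{L}^{n+1}$ measure on which $\partial_{n+1}f$ exists. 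Second, along the hyperplane slices $\{x_{n+1}=z\}$: for $\mathcal{L}^1$-almost every $z$ the slice avoids $E$, and the inductive hypothesis applied to $f(\cdot,z)$ yields a subset $U_z\subset[0,1]^n$ of positive $\mathcal{L}^n$ measure on which $f(\cdot,z)$ is Fr\'echet differentiable; Fubini packages these into a set $B$ of positive $\mathcal{L}^{n+1}$ measure on which the gradient in the first $n$ variables exists.

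The main obstacle is to combine these two pieces into joint Fr\'echet differentiability on a common subset of positive measure. The difficulty is twofold: the sets $A$ and $B$ need not meet in positive measure, and even where they do, controlling the cross term $f(y+h,z+k)-f(y,z+k)-\nabla_y f(y,z)\cdot h$ requires more than just the two separate partial pieces of the derivative. My strategy for this step is to restrict to a Lusin-measurable subset on which the partial gradient $\nabla_y f$ is continuous, intersect with a level set $\{x:l_f(x)\le M\}$ of positive measure, and use the sequence of radii along which the full $(n+1)$-dimensional oscillation of $f$ at a common density point is controlled by $Mr$ to bound the cross term; at such a point the joint linear approximation should then follow from a standard density argument. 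That this combining step works under countability of $E$, but must fail under the weaker hypothesis of $\mathcal{H}^g$-nullness in view of Theorem \ref{main2}, is the subtlest and essential part of the argument.
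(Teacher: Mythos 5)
Your base case and the two slicing steps are essentially fine (a countable exceptional set indeed misses almost every line and almost every hyperplane slice, and $l_{f|_{\ell}}\le l_f$), but there is a genuine gap exactly where you flag it: the combination step, and your proposed fix cannot work. First, nothing in the construction forces the set $A$ (where $\partial_{n+1}f$ exists) and the set $B$ (where the slice functions $f(\cdot,z)$ are differentiable) to meet in positive measure: Fubini gives each of them positive $\mathcal{L}^{n+1}$ measure, but the fibrewise positive-measure sets coming from the two slicings are unrelated, and $A\cap B$ may well be null. Second, and more fundamentally, even at a common density point with $l_f\le M$ and with the partial gradient continuous on a Lusin set, the only information you have at points off the good set is $\sup_{y\in B(x,r)}|f(y)-f(x)|\le Mr$ along some sequence of radii $r\to 0$. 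This bounds the cross term $f(y+h,z+k)-f(y,z+k)-\nabla_y f(y,z)\cdot h$ only by $O(Mr)$, never by $o(r)$, so no density argument can upgrade the two separate partial pieces to Fr\'echet differentiability; finite lower scaled oscillation gives Lipschitz-type control only at selected scales and only against the single value $f(x)$, which is far weaker than the finite $L_f$ (control at all small scales) that Stepanov-type slicing arguments need --- recall the Balogh--Cs\"ornyei example with $l_f=0$ a.e.\ and no differentiability anywhere. The paper itself notes that the statement was open even when the exceptional set is empty, a case in which your slicing steps are trivial, which shows the whole difficulty sits in the step you leave as ``a standard density argument.''

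For comparison, the paper's proof is not an induction on dimension at all: it argues by contradiction. If $f$ were differentiable only on a null set, Corollary \ref{lf} gives $L_f=\infty$ almost everywhere; Lemma \ref{above} then constructs, by an iterated Vitali-covering modification of a line segment, a simple curve $\gamma$ of finite length such that $g:=f|_\gamma$ still satisfies $L_g=\infty$ at $\H^1$-a.e.\ point of $\gamma$; since $\gamma\cap E$ is countable, Corollary \ref{fdiff} (the curve version of the one-dimensional result, obtained from Lemma \ref{prop1} and Theorem \ref{main}) makes $g$ differentiable at positively many points of $\gamma$, contradicting $L_g=\infty$ a.e. The reduction to a single finite-length curve that inherits the bad behaviour of $f$ is the key idea missing from your plan; countability of $E$ enters there because $\gamma\cap E$ stays countable, not because almost every slice avoids $E$.
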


Geometrically, the main difference between an exceptional set of $\sigma$-finite $\mathcal{H}^{n-1}$ (or, $\mathcal{H}^g$) measure and a countable exceptional set is that the former one may be mapped onto a large set by a continuous function, while, no matter what, the image of a countable set is always countable. Although, the validity of Theorem \ref{fdiffalld} was an open problem, even when assuming the exceptional set is empty.


%

\section{Preliminaries}\label{prelims}
To start with, we fix some notations. As usual, we let $\mathcal{L}^n$ denote the $n$-dimensional Lebesgue measure and $\mathcal{H}^s$ denotes the $s$-dimensional Hausdorff measure for $n \in \mathbb{N}$ and $s \in [0,\infty)$. We will also use Hausdorff measures $\H^g$ where $g$ is a gauge function. Recall that a function $g:[0,\infty) \rightarrow [0,\infty)$ is called a gauge function, if it is continuous, monotone increasing, $g(0)=0$ and $g(t)>0$ for $t>0$. The Hausdorff measure $\mathcal{H}^g$ is defined as the limit, $\lim_{\delta\to 0} \mathcal{H}_\delta^g$, where for a given set $E$, 
\begin{align*}
\mathcal{H}_\delta^g(E)=\inf\left\{\sum_i g(\mbox{diam}(E_i))~\big|~E\subset\bigcup_i E_i,\  \mbox{diam}(E_i)<\delta\right\}.
 \end{align*}  

We denote the open ball around $x$ of radius $r$ by $B(x,r)$, and, for any direction $\theta$, the orthogonal projection onto $\theta^{\perp}$ by $\mbox{proj}_{\theta}$.  

With respect to inequalities, we will commonly use $a \lesssim b$ to denote instances when there exists a constant $C \in (0,\infty)$ such that $a \leq Cb$ and $a \ll b$ in situations where $a<Cb$ for some positive constant $C$ much smaller than 1. 

We define:
	\begin{definition}[Upper/Lower Scaled Oscillation] \label{osc}
	Let $(X,d)$ be a metric space, $f : X \rightarrow \mathbb{R}^n$ be continuous.  For $x \in X$ define the upper and lower scaled oscillation functions of $f$ as:
	\begin{align*}
	 L_f(x):= \limsup_{r\rightarrow 0} \frac{\sup_{d(x,y)\leq r} \|f(y)-f(x)\|}{r}\\
	 l_f(x):= \liminf_{r\rightarrow 0} \frac{\sup_{d(x,y)\leq r} \|f(y)-f(x)\|}{r}
	\end{align*}
	\end{definition}

Next, we state some definitions and a theorem that appear in \cite{Sa37}. The half-line in $\mathbb{R}^n$ issuing from a point $x$ and containing a point $y \neq x$ will be denoted by $\vec{xy}$. Given $E \subset \mathbb{R}^n$, a half-line $L$ issuing from a point $x \in E$ is called an {\it intermediate half-tangent} of $E$ at $x$, if there exists a sequence $\{y_n\}$ of points of $E$ distinct from $x$, converging to $x$ and such that the sequence of half-lines $\{\vec{xy_n}\}$ converges to $L$.  

\begin{definition}[Contingent]
The set of all intermediate half-tangents of a set $E$ at a point $x$ is termed the  contingent of $E$ at $x$ and denoted by $\mbox{contg}_E x$.
\end{definition}

When $E\subset\R^n$ is homeomorphic to $[0,1]^k$, and the contingent of $E$, at a point $x$, is a $k$-dimensional plane, then we say that it is the {\it tangent of $E$ at $x$}. Indeed, geometrically, this means that locally around $x$ the set $E$ can be very well approximated by this $k$-plane: in a small enough neighborhood of $x$, all the half-lines $\vec{xy}$ are within a very small angle of the plane. In addition, close to any direction in the plane, there is an $\vec{xy}$. 

In the special case when $E$ is the graph of a continuous function $f$, the function $f$ is differentiable at $x$ if and only if its graph has a tangent at the point $(x,f(x))$ and this tangent is not a vertical plane (i.e. it does not contain the vertical line). Also note that, for any function $f$, at least one of the two vertical half-lines belongs to the contingent of the graph of the function at $(x,f(x))$ if and only if $L_f(x)=\infty$. On the other hand, $l_f(x)<\infty$ implies that the contingent contains at least one non-vertical half-line.


%
%

We will use the following lemma from Saks \cite{Sa37}:

	\begin{lemma}[Theorem 3.6, Chapter IX in Saks \cite{Sa37}]\label{3.6}
	Given a planar set $R$, let $P$ be a subset of $R$ at no point of which the contingent of $R$ is the whole plane. Then
	\begin{enumerate}
		\item[i)] the set $P$ is a countable union of sets of finite length and 
		\item[ii)] at every point of $P$, except at those of a set of length zero, either the set $R$ has a tangent or else the contingent of $R$ is a half-plane.
	\end{enumerate} 
	\end{lemma}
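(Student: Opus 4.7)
The plan is to handle the two conclusions separately: (i) is a direct cone-covering argument, while (ii) is a density analysis of the rectifiable decomposition produced in (i).

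For (i), at each $x \in P$ the hypothesis $\mathrm{contg}_R x \ne S^1$ together with closedness of contingents supplies a direction $\theta$, a half-angle $\alpha > 0$, and a radius $\rho > 0$ such that
\[ R \cap B(x,\rho) \cap \bigl(x + C(\theta,\alpha)\bigr) = \{x\}, \]
where $C(\theta,\alpha)$ denotes the open cone of half-angle $\alpha$ about $\theta$. Fixing a countable dense sequence $\{\theta_j\}$ of directions and letting $\alpha,\rho$ range over positive rationals decomposes $P = \bigcup_{j,\alpha,\rho} P_{j,\alpha,\rho}$ into countably many pieces on which the cone condition is uniform. For any $y,z \in P_{j,\alpha,\rho}$ with $\|y-z\| < \rho$, applying the cone condition at $y$ to the point $z \in R$ gives $\|\mathrm{proj}_{\theta_j^\perp}(y-z)\| \ge (\sin\alpha)\|y-z\|$, so the projection onto $\theta_j^\perp$ is locally bi-Lipschitz and each $P_{j,\alpha,\rho}$ is the graph of a Lipschitz function. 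Lipschitz graphs have locally finite $\mathcal{H}^1$-measure, which proves (i).

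For (ii), on each graph $P_{j,\alpha,\rho}$ Rademacher's theorem yields a classical tangent line $\tau_x$ at $\mathcal{H}^1$-a.e.\ $x$, and every such $x$ is a linear-density-one point of the piece. Both directions of $\tau_x$ then belong automatically to $\mathrm{contg}_R x$, and the cone condition confines $\mathrm{contg}_R x$ to $S^1 \setminus C(\theta_j,\alpha)$. What remains is to show that at almost every such $x$ the contingent coincides with $\tau_x$ or with a closed semicircle bounded by $\tau_x$, ruling out any exotic contingent that contains an extra direction $w \ne \pm v_x$ without filling the whole closed semicircle on its side.

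The main obstacle is exactly this exotic case. My plan is to argue by contradiction: let $A \subseteq P$ denote the set of exotic points and suppose $\mathcal{H}^1(A) > 0$. At each $x \in A$ one has, in addition to the cone gap around $\theta_j$, a further gap in $\mathrm{contg}_R x$ producing a new forbidden direction $\phi_x$; by further countable decomposition one may take $\phi_x$ uniform on a positive-measure sub-piece of $A$. Applying the cone-covering of (i) a second time shows this sub-piece is itself a Lipschitz graph, this time over $\phi_x^\perp$. At a joint density-one point of both graph structures, one obtains two candidate tangent lines at $x$, forced to agree by uniqueness of approximate tangent lines to rectifiable planar sets. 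The two tangents must however differ, since the secondary decomposition is transverse to $\theta_j^\perp$ by construction and encodes the extra direction $w_x$ lying outside $\tau_x$. This contradiction forces $\mathcal{H}^1(A) = 0$ and completes (ii).
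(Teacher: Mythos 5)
This lemma is quoted from Saks (Theorem 3.6, Chapter IX) and the paper gives no proof of it, so your proposal can only be judged on its own terms. Part (i) is essentially the standard argument and is fine, up to one small imprecision: the cone condition coming from a missing arc of the contingent is one-sided, so the estimate $\|\mathrm{proj}_{\theta_j^\perp}(y-z)\|\ge(\sin\alpha)\|y-z\|$ does not follow from the condition at $y$ alone; you must invoke it at both $y$ and $z$ (which you may, since both lie in the same uniformized piece $P_{j,\alpha,\rho}$). That is a trivial fix.

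Part (ii), however, has a genuine gap at the final contradiction. The exotic hypothesis at $x$ is that $\mathrm{contg}_R x$ contains a direction $w_x\notin\tau_x$; this direction is realized by points of $R$ that need not belong to $P$, to $A$, or to any of your Lipschitz pieces, and your argument never uses those points. What you actually extract from the exotic point is only a second omitted cone around $\phi_x$, and from it a second Lipschitz-graph structure on a positive-measure subset $A'$. But two Lipschitz graphs both containing $A'$ are perfectly compatible: at $\mathcal{H}^1$-a.e.\ density point of $A'$ their tangent lines must coincide with the approximate tangent of $A'$, and nothing forces them to differ. The claim that ``the secondary decomposition is transverse to $\theta_j^\perp$'' is unsupported --- the graph over $\phi_x^\perp$ only constrains its tangents to avoid the cone about $\pm\phi_x$, a constraint $\tau_x$ can (and at a.e.\ point of $A'$ does) satisfy simultaneously with avoiding the cone about $\pm\theta_j$. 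So no contradiction arises, and indeed it cannot arise from this data alone, since you never used $w_x$. The actual mechanism behind the dichotomy couples the exotic direction with the cone condition at \emph{neighboring} points of $A$: uniformize so that at every $a\in A$ the whole set $R$ omits a fixed cone $a+C(u,\epsilon)$ within radius $\rho'$, with $u$ lying strictly between $w_x$ and one of the tangent directions $\pm v_x$. If $y\in R$ approaches $x$ in direction $\approx w_x$ at scale $r$, then choosing $a\in A$ on the tangent line at distance comparable to $r$ on the appropriate side (possible at a.e.\ $x$ by density one of $A$ along its tangent, which controls the gaps of $A$ to size $o(r)$) places $y$ inside the forbidden cone $a+C(u,\epsilon)$, contradicting the cone condition at $a$. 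This shows that at a.e.\ $x$ no direction of the contingent can be separated from $\tau_x$ by an omitted arc on the same side, which is exactly the ``tangent or half-plane'' alternative; without some argument of this type that actually exploits the points of $R$ realizing $w_x$, your proof of (ii) does not close.
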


The analogous result can also be proved for sets in $\mathbb{R}^n$ for any $n$ by the same argument as the planar proof in \cite{Sa37} (see \cite{Rog35}). At each point except at those of a set of zero ($n-1$)-dimensional measure, the contingent is either the whole space, or a halfspace, or an ($n-1$)-plane. In this paper, in dimensions $n\ge 3$, we will use only the following corollary, that can also be thought of as a variant of Stepanov's theorem mentioned in the introduction:

\begin{corollary}\label{lf} Let $f: \mathbb{R}^n \rightarrow \mathbb{R}$ be a continuous function. Then $L_f(x)=\infty$ at almost every point $x$ where $f$ is non-differentiable. 
\end{corollary}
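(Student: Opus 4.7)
The plan is to translate the corollary into a statement about the contingent of the graph of $f$ and then invoke the $\R^{n+1}$-analogue of Lemma~\ref{3.6} quoted just before the corollary. Let $\Gamma=\{(x,f(x)):x\in\R^n\}\subset\R^{n+1}$ and set $P=\{(x,f(x)) : L_f(x)<\infty\}\subset\Gamma$. As observed in Section~\ref{prelims}, the condition $L_f(x)<\infty$ is equivalent to the contingent of $\Gamma$ at $(x,f(x))$ containing neither the upward nor the downward vertical half-line; in particular it is not all of $\R^{n+1}$.

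Applying the higher-dimensional Saks-type lemma then produces a set $N\subset\Gamma$ with $\H^n(N)=0$ such that at every point of $\Gamma\setminus N$ the contingent of $\Gamma$ is either the whole space $\R^{n+1}$, a closed half-space, or an $n$-dimensional plane. On $P\setminus N$ the first option is excluded by construction, so the contingent there is a half-space or an $n$-plane.

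The crux of the argument, and essentially the only step that is not purely definitional, is the elementary observation that every closed half-space $\{v\in\R^{n+1}:v\cdot w\ge 0\}$ contains at least one vertical half-line: the upward one if $e_{n+1}\cdot w\ge 0$ and the downward one if $e_{n+1}\cdot w\le 0$. Since the contingent at points of $P$ contains no vertical half-line, the half-space alternative is impossible on $P$. Hence at every point of $P\setminus N$ the contingent is an $n$-plane with no vertical direction, i.e.\ the graph of a linear functional $\R^n\to\R$, which is exactly the derivative of $f$ at the corresponding $x$.

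To finish, project $N$ back down: the map $(x,f(x))\mapsto x$ is $1$-Lipschitz, so the image of $N$ has $\H^n$-measure zero and therefore Lebesgue measure zero in $\R^n$. Thus $f$ is differentiable at almost every $x$ with $L_f(x)<\infty$, which is a reformulation of the corollary. The main obstacle is really just the half-space observation above; everything else is a direct consequence of the definitions and the Saks-type lemma already at hand.
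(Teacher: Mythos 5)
Your argument is correct and is precisely the derivation the paper intends: the corollary is stated without proof as an immediate consequence of the higher-dimensional Saks--Roger statement, combined with the preliminary observations that $L_f(x)<\infty$ is equivalent to the contingent of the graph containing no vertical half-line and that a non-vertical $n$-plane contingent is equivalent to differentiability. Your only added step, that every half-space contains a vertical half-line (so that alternative is excluded), together with the $1$-Lipschitz projection of the $\H^n$-null exceptional set, is exactly how the paper's implicit proof is meant to be completed.
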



\section{$\sigma$-finite curves and differentiability}\label{sigma}

In what follows, we will use the notation $F^{\theta}_x$ for the 
($n-1$)-dimensional plane through the point $x\in{\mathbb R}^n$ of normal direction $\theta$.

First, let $C$ be a continuous simple curve in the plane. We show that we have control of the contingent for points that are in some sense isolated with respect to the curve:

	\begin{lemma}\label{dlemma2}
	Let $C$ be a continuous, closed simple curve in the plane and suppose that on a set $D\subset C$, there exist three directions $\theta_1$, $\theta_2, \theta_3$ such that for every $z\in D$, $z$ is isolated in $C \cap F^{\theta_i}_{z}$ for each $i \in \{1,2,3\}$.  Then at almost all $z \in D$, $C$ has a tangent line.
	\end{lemma}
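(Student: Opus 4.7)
The plan is to combine a local geometric analysis of the contingent of $C$ at points of $D$ with Saks' Lemma~\ref{3.6}: I would first show that on $D$ the contingent is neither the whole plane nor a half-plane, and then apply the lemma to conclude that a tangent must exist almost everywhere on $D$.

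Fix $z\in D$. The isolation hypothesis furnishes a radius $r>0$ such that $C\cap B(z,r)$ meets each of the three lines $F^{\theta_i}_z$ only at $z$. Since $\theta_1,\theta_2,\theta_3$ give three distinct lines through $z$, they partition $B(z,r)\setminus\{z\}$ into six open pie-shaped sectors; opposite sectors have equal angle and the six angles sum to $360^\circ$, so each individual sector has angle strictly less than $180^\circ$. Taking a local injective parameterization $\gamma\colon (-\delta,\delta)\to C$ of $C$ near $z$ with $\gamma(0)=z$ and $\gamma((-\delta,\delta))\subset B(z,r)$, the images $\gamma((-\delta,0))$ and $\gamma((0,\delta))$ are connected subsets of the open complement of the three lines in $B(z,r)\setminus\{z\}$, so each branch lies entirely inside one of the six sectors. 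Hence the contingent of $C$ at $z$ is contained in the union of the closures of (at most) two of these sectors.

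Next, I would show that such a union of two sector closures cannot contain any closed arc of angular length $180^\circ$. If the two sectors coincide or are adjacent, the union is a single closed arc whose total angular length is at most the sum of two of the three distinct sector angles, which is strictly less than $180^\circ$ because the remaining angle is positive. If the two sectors are non-adjacent (including the case of opposite sectors), they share no bounding ray, so their closures on the circle of directions are disjoint arcs, each of angular length less than $180^\circ$; since a half-circle arc is connected, it cannot fit inside such a disjoint union either. Therefore, at every $z\in D$, the contingent of $C$ at $z$ is neither the whole plane nor a half-plane. Applying Lemma~\ref{3.6} with $R=C$ and $P=D$, I conclude that at every $z\in D$ outside a set of length zero, either $C$ has a tangent or its contingent is a half-plane; having excluded the latter at every $z\in D$, $C$ must admit a tangent at almost every point of $D$.

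The main technical step is the angular casework above: one must verify that regardless of which two of the six sectors contain the two branches of $C$ near $z$, the resulting union of sector closures cannot cover any closed $180^\circ$ arc. Once this geometric fact is in place, the rest of the argument is essentially a direct invocation of Saks' lemma.
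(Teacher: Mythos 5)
Your proof is correct and follows essentially the same route as the paper: the three lines through $z$ cut a small ball into six sectors, the two local branches of the simple curve each stay in one sector, so the contingent at $z$ can be neither the whole plane nor a half-plane, and Saks' Lemma~\ref{3.6} then gives a tangent at almost every point of $D$. The only difference is that you spell out the angular casework (adjacent versus non-adjacent sectors) that the paper leaves implicit, which is a welcome addition rather than a deviation.
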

\begin{proof}
For a fixed $z \in D$, there exists a $\delta$-ball around $z$ such that $F^{\theta_i}_{z}\cap C\cap B(z,\delta)=\{z\}$ for each $i$.  The $F^{\theta_i}_{z}$ decompose the $\delta$-ball around $z$ into 6 regions. The curve is simple and continuous, so the curve can only lie in at most 2 of the 6 regions around $z$. Therefore the contingent of $C$ cannot be the whole plane or a halfplane. By Lemma \ref{3.6}, at all points of $D$ except at those of a set of length zero, $C$ has a tangent.
\end{proof}

\begin{remark}\label{remark} We observe that it suffices to assume that the curve is injective at almost every point of $D$ to prove Lemma \ref{dlemma2}.  Any self-intersections away from points in $D$ will not affect the argument. 
\end{remark}

In the planar case we prove Theorem \ref{main} by showing that if $C$ has 
$\sigma$-finite $\mathcal{H}^1$ measure then there exists a set $D$ of positive   $\mathcal{H}^1$ measure for which Lemma ~\ref{dlemma2} can be applied. This is a corollary of a theorem from Falconer \cite{Fa85}.

	\begin{lemma}[Falconer \cite{Fa85}, Corollary to Theorem 5.8] \label{proj}
	Consider $E \subset \mathbb{R}^d$ and let $A$ be a subset of a primary axis. Let $E_x=F^{\theta}_x\cap E$, where $\theta$ is the direction of the primary axis. Suppose that if $x \in A$, then $\mathcal{H}^0(E_x)>c$, for some constant $c$.  Then
	\begin{align*}
	\mathcal{H}^s(E) \geq bc\mathcal{H}^s(A)
	\end{align*}
	where $b$ depends on $s$ and $d$. 
	\end{lemma}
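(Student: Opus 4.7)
I would derive the inequality from a Fubini-type slicing argument for Hausdorff measure. The idea is that since each fiber $E_x$ over $A$ carries more than $c$ points, every cover of $E$ by small sets induces a cover of $A$ by projections with multiplicity at least $\lfloor c \rfloor + 1$; combined with Frostman's lemma on $A$, this multiplicity upgrades the trivial projection inequality $\mathcal{H}^s(E) \geq \mathcal{H}^s(A)$ into the desired factor $bc$.

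The first step is to reduce to integer multiplicity and stratify $A$ by fiber separation. Since $\mathcal{H}^0$ takes only non-negative integer or infinite values, the hypothesis gives $\mathcal{H}^0(E_x) \geq k := \lfloor c \rfloor + 1$ for every $x \in A$. For each such $x$, fix $k$ distinct points $y_1(x),\ldots,y_k(x) \in E_x$, let $\rho(x) > 0$ be their minimum pairwise distance, and set $A_m := \{x \in A : \rho(x) \geq 1/m\}$. Then $A_m \nearrow A$, so by monotone convergence of $\mathcal{H}^s$ it suffices to prove $\mathcal{H}^s(E) \gtrsim k\, \mathcal{H}^s(A_m)$ for each fixed $m$ and send $m \to \infty$.

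Next, fix $m$ and $\delta < 1/m$, and consider any cover $\{U_i\}$ of $E$ with $\diam U_i < \delta$. Because the $k$ chosen points of each fiber over $A_m$ are separated by at least $1/m > \delta$, no single $U_i$ can contain two of them; hence every $x \in A_m$ lies in $\pi(U_i)$ for at least $k$ distinct indices $i$, where $\pi$ denotes the $1$-Lipschitz orthogonal projection onto the primary axis (so $\pi^{-1}(x) = F^\theta_x$). Assuming $\mathcal{H}^s(A_m) > 0$, apply Frostman's lemma to $A_m$ to produce a non-zero Borel measure $\mu$ supported on $A_m$ satisfying $\mu(V) \leq (\diam V)^s$ for every Borel set $V$ and $\mu(A_m) \geq b\, \mathcal{H}^s(A_m)$ for some $b = b(s,d) > 0$. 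Integrating the pointwise multiplicity bound against $\mu$ yields
\[
k\,\mu(A_m) \leq \int \sum_i \mathbf{1}_{\pi(U_i)}\, d\mu = \sum_i \mu(\pi(U_i)) \leq \sum_i (\diam \pi(U_i))^s \leq \sum_i (\diam U_i)^s,
\]
where the last inequality is the $1$-Lipschitz property of $\pi$. Taking the infimum over $\delta$-covers of $E$, letting $\delta \to 0$, and then $m \to \infty$ gives $\mathcal{H}^s(E) \geq bc\, \mathcal{H}^s(A)$.

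The main obstacle is the quantitative Frostman bound $\mu(A_m) \geq b\, \mathcal{H}^s(A_m)$: the classical Frostman lemma yields only $\mu(A_m) \gtrsim \mathcal{H}^s_\infty(A_m)$ in terms of the Hausdorff content, which can in general be strictly smaller than $\mathcal{H}^s(A_m)$. Upgrading to $\mathcal{H}^s$ requires a localization via the density theorem for Hausdorff measure — after discarding a null set one may assume bounded upper $s$-density on $A_m$, so that on sufficiently small balls the content and the measure agree up to a dimensional constant. This localization is the substantive content of Falconer's Theorem 5.8 and is where the constant $b = b(s,d)$ enters.
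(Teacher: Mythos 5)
A preliminary remark: the paper itself contains no proof of this lemma --- it is quoted from Falconer's book, where it is obtained from a slicing inequality proved via comparable net (dyadic) measures --- so your attempt is really being measured against Falconer's argument. The combinatorial skeleton of your proposal is sound: $\mathcal{H}^0(E_x)>c$ does yield $k=\lfloor c\rfloor+1$ points in each fibre, the stratification $A_m=\{x: \rho(x)\ge 1/m\}$ increases to $A$ (and continuity from below is legitimate for the Borel regular outer measure $\mathcal{H}^s$), and for a $\delta$-cover $\{U_i\}$ of $E$ with $\delta<1/m$ every $x\in A_m$ indeed lies in $\pi(U_i)$ for at least $k$ distinct $i$; integrating this multiplicity against any measure $\mu$ on $A_m$ with $\mu(V)\le(\mathrm{diam}\,V)^s$ correctly gives $k\,\mu(A_m)\le\sum_i(\mathrm{diam}\,U_i)^s$.

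The genuine gap is the step you flag yourself, and it is not a removable technicality of this scheme: there is in general no measure $\mu$ with $\mu(V)\le(\mathrm{diam}\,V)^s$ and $\mu(A_m)\ge b\,\mathcal{H}^s(A_m)$, since any such $\mu$ has total mass at most the content $\mathcal{H}^s_\infty(A_m)$, and for $s<1$ a subset of a line can have enormous $\mathcal{H}^s$ measure while its content is at most $(\mathrm{diam})^s$ of a short interval containing it. So the quantitative input that carries the whole lemma is asserted, not proved, and the sketched repair is also not correct as stated: the density theorem does not make content and measure comparable on small balls; what it gives (for a measurable $A_m$ of finite measure, after an Egorov-type uniformization and discarding a null set) is the one-sided bound $\mathcal{H}^s(A_m\cap B(x,r))\le Cr^s$ for all sufficiently small $r$. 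That bound does close the argument, but by a different mechanism than Frostman: drop $\mu$ altogether, integrate your multiplicity estimate against $\lambda:=\mathcal{H}^s$ restricted to the uniformized subset, and bound $\lambda(\pi(U_i))\le C(\mathrm{diam}\,U_i)^s$ directly once $\delta$ is below the uniformization scale. Even then one must first reduce to $A$ measurable with $\sigma$-finite $\mathcal{H}^s$ measure, whereas the lemma is stated for arbitrary $E$ and $A$; this is precisely why Falconer's proof runs through comparable net measures, where dyadic nesting lets the multiplicity count be made against $\mathcal{H}^s$ itself for arbitrary sets. As written, your argument establishes the inequality with $\mathcal{H}^s(A)$ replaced by $\mathcal{H}^s_\infty(A)$ (which happens to suffice for the paper's application, where $s=1$ and content equals measure on subsets of a line), but it does not prove the stated lemma.
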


\begin{proof}[Proof of Theorem \ref{main} in the plane]
Let $\theta \in (0,\pi)$ be some angle and $L_{\theta}$ be the line with angle $\theta$ through the origin.  We first note that 
\begin{align*}
\mathcal{H}^1\left[\mbox{proj}_{\theta}(C) \right]>0
\end{align*}
for all but at most one $\theta$.  Therefore, without loss of generality we assume $\mathcal{H}^1\left[\mbox{proj}_{\theta}(C) \right]>0$. Consider the decomposition of $C$, $\{C_M\}$, given by the $\sigma$-finite condition, and denote $E_M:=\bigcup_1^M C_i$.  Since 
\begin{align*}
\mbox{proj}_{\theta}(E_M)\xrightarrow{M\rightarrow \infty} \mbox{proj}_{\theta}(C),
\end{align*}
  there exists an $M_0$ such that $\mathcal{H}^1\left[\mbox{proj}_{\theta}(E_M)\right]>0$ for $M\geq M_0$.  Now
\begin{align*}
0<\mathcal{H}^1\left[\mbox{proj}_{\theta}(E_{M_0})\right]\leq\mathcal{H}^1(E_{M_0})<\infty.
\end{align*}
By Lemma \ref{proj}, 
\begin{align*}
\mathcal{H}^0\left[E_{M_0}\cap F^{\theta}_z\right]<\infty
\end{align*}
for almost all $z \in \mbox{proj}_{\theta}(E_{M_0})$.  Furthermore, for almost all $z \in \mbox{proj}_{\theta}(E_{M_0})$, 
\begin{align*}
\mathcal{H}^0\left[E_M\cap F^{\theta}_z\right]<\infty
\end{align*}
for $M>M_0$.  Thus, $C\cap F^{\theta}_z$ is at most countable for almost all $z \in \mbox{proj}_{\theta}(E_{M_0})$.  

Every closed countable set of points must contain an isolated point. Let $X_{\theta}$ be the set of points in $C$ that are isolated in $C\cap F^{\theta}_z$ for some $z$. Then $X_\theta$ contains at least one point of $C\cap F^{\theta}_z$ for almost all $z \in \mbox{proj}_{\theta}(E_{M_0})$.

In order to show that $X_{\theta}$ is a Borel set, we can show that its complement is an $F_{\sigma \delta}$ set.  Consider, for each $n,m \in \mathbb{N}$, $n>m$, the set
\begin{align*}
F_{n,m}:=\left\{x \in C ~\big|~ \mbox{There exists a } y \in C\cap F^{\theta}_x  \mbox{ such that } \|x-y\| \in \left[\frac{1}{n}, \frac{1}{m}\right] \right\}.
\end{align*}
Since $C$ is closed $F_{n,m}$ is closed. We take the union over $n$ and then the intersection over $m$ and we obtain the complement to $X_{\theta}$.

By Lemma \ref{proj}, $\mathcal{H}^1(X_{\theta})>0$.
This holds true for uncountably many $\theta$.  Therefore, there exist 3 angles 
$\theta_1$, $\theta_2$, $\theta_3$ such that 
\begin{align*}
\mathcal{H}^1\left(\bigcap_{i} X_{\theta_i}\right)>0
\end{align*}
and we can apply Lemma \ref{dlemma2} with $D=\bigcap_{i} X_{\theta_i}$ to obtain positively many points at which $C$ has a tangent.
\end{proof}

\begin{proof}[Proof of Theorem \ref{main} in higher dimension]
We show that our arguments can be extended to simple curves in $\mathbb{R}^n$. 
Let us first consider the case $n=3$.

For any direction $\theta\in S^2$, if $\mbox{proj}_\theta C$ has a tangent $\tau$ at $\mbox{proj}_\theta x$, then $\mbox{contg}_Cx$ must lie inside the 2-dimensional plane $\mbox{proj}_\theta^{-1} \tau$.
For almost all directions, almost all 2 dimensional plane sections will intersect the curve at isolated points (with respect to the section) for positively many points of the curve. By Lemma \ref{dlemma2} and Remark \ref{remark}, it is sufficient to find 6 directions $\eta_1$, $\eta_2$, $\eta_3$ and $\eta_4$, $\eta_5$, $\eta_6$: two triples belonging to two different planes $\theta_1^\perp$, $\theta_2^\perp$, such that each point $x$ in a subset $D\subset C$ is isolated in each of the six 2-dimensional sections $F_x^{\eta_j}\cap C$, and for which $D$ has positive measure. Then for each $i=1,2$ and for a sufficiently small $\delta$, we can find tangent $\tau_i$ of $\mbox{proj}_{\theta_i}(C\cap B(x,\delta))$ at $\mbox{proj}_{\theta_i}x$ at positively many $x\in D$, and the tangent of $C$ at $x$ will be the line $\bigcap_i\mbox{proj}_{\theta_i}^{-1}\tau_i$.

The higher dimensions follow by taking more planes.
\end{proof}

%
%

%
%
%
%
%
%

%

Now, let $f:\R^n\rightarrow \mathbb{R}^m$ be a continuous mapping, and let $G_f(E)$ denote the graph of $f$ on a set $E \subset \mathbb{R}^n$. The following lemma shows that the finite lower scaled oscillation implies a certain $\sigma$-finite condition and a Luzin type condition. Actually, we will use this lemma only in the special case when $m=s=1$ and $E$ is a simple curve of finite length. However, the validity of this lemma in higher dimensions is the main motivation behind our construction of a surface $S$ in Section \ref{higher} that we will use in the proof of Theorem \ref{main2}. So let us state our lemma in its full  generality:

	\begin{lemma} \label{prop1}
Let $s>0$ be arbitrary, let $E\subset \mathbb{R}^n$ be a Borel set, and let $f:E\rightarrow \mathbb{R}^m$ be a continuous mapping satisfying $l_f(x)<\infty$ and $\liminf_{r\to 0}\mathcal{H}^s(E\cap B(x,r))/r^s>0$ at all but countably many points $x\in E$.
Then 
	\begin{enumerate}
		\item $G_f(E)$ has $\sigma$-finite $\mathcal{H}^s$-measure.
		\item $\mathcal{H}^{s}[G_f(H)]=0$ for every set $H \subset E$ with $\mathcal{H}^{s}(H)=0$.
	\end{enumerate}
	\end{lemma}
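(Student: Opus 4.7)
The plan is to decompose $E$, modulo a countable exceptional set, into countably many level sets on which both the oscillation bound from $l_f$ and the lower density hypothesis are uniformly quantitative, and then to estimate the Hausdorff measure of the graph over each such piece by a covering argument.

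For each pair of positive integers $k,N$, set
$$E_{k,N}=\left\{x\in E:\ l_f(x)<k\ \text{and}\ \mathcal{H}^s(E\cap B(x,r))\geq r^s/k\ \text{for all}\ 0<r<1/N\right\}.$$
The hypothesis guarantees $E\setminus\bigcup_{k,N}E_{k,N}$ is countable. At every $x\in E_{k,N}$, since $l_f(x)<k$ is a liminf condition, there exist arbitrarily small radii $r_x\in(0,1/N)$ with
$$\sup_{y\in E\cap B(x,r_x)}\|f(y)-f(x)\|\leq k\,r_x,$$
so that the graph piece $G_f(E\cap B(x,r_x))$ is contained in a product set of diameter at most $2\sqrt{1+k^2}\,r_x$.

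For (1), fix $k,N,R$. For each $x\in E_{k,N}\cap B(0,R)$ pick such an $r_x<\min(1/N,1)$, and apply the $5r$-covering theorem to the \emph{shrunken} balls $\{B(x,r_x/5)\}$: this produces a countable disjoint subfamily $\{B(x_i,r_{x_i}/5)\}_i$ whose fivefold enlargements $\{B(x_i,r_{x_i})\}_i$ still cover $E_{k,N}\cap B(0,R)$. The point of this trick is that each enlarged covering ball is, by construction, a ball on which the Lipschitz-type bound actually holds. Thus the graph pieces $G_f(E\cap B(x_i,r_{x_i}))$ cover $G_f(E_{k,N}\cap B(0,R))$ with individual diameters $\lesssim_k r_{x_i}$, giving
$$\mathcal{H}^s(G_f(E_{k,N}\cap B(0,R)))\lesssim_k \sum_i r_{x_i}^s.$$
The lower-density hypothesis applied to each small ball gives $r_{x_i}^s\leq 5^s k\,\mathcal{H}^s(E\cap B(x_i,r_{x_i}/5))$, and disjointness together with the (local) finiteness of $\mathcal{H}^s$ on $E$ — which holds in the settings where this lemma is applied, in particular when $E$ is a finite-length curve and $s=1$ — yields $\sum_i r_{x_i}^s\lesssim_k \mathcal{H}^s(E\cap B(0,R+1))<\infty$. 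Summing over $k,N,R$ and trivially adjoining the graph over the countable exceptional set proves (1).

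For (2), let $H\subset E$ satisfy $\mathcal{H}^s(H)=0$ and fix $\epsilon>0$. Borel regularity of the finite Borel measure $\mathcal{H}^s$ restricted to $E$ provides an open $U\supset H$ with $\mathcal{H}^s(E\cap U)<\epsilon$. Repeat the covering construction on $H\cap E_{k,N}$, additionally requiring $B(x,r_x)\subset U$ for each selected radius, so that the disjoint small balls $B(x_i,r_{x_i}/5)$ all lie in $U$. The same density-and-disjointness estimate gives $\sum_i r_{x_i}^s\lesssim_k \mathcal{H}^s(E\cap U)<\epsilon$, whence $\mathcal{H}^s(G_f(H\cap E_{k,N}))\lesssim_k \epsilon$. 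Letting $\epsilon\to 0$ and summing over $k,N$ (the intersection of $H$ with the countable exceptional set contributing only a countable, hence null, set of graph points) completes (2). The main technical obstacle is that $l_f$ is a \emph{liminf}: Lipschitz-type control at each $x\in E_{k,N}$ is available only along a sparse sequence of radii, not at every small scale. Applying the $5r$-covering to the shrunken balls $B(x,r_x/5)$ — rather than to $B(x,r_x)$ themselves — is the technical device that resolves this, since the enlarged covering balls then coincide exactly with the balls on which the Lipschitz bound is guaranteed.
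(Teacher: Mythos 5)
Your proposal is correct and follows essentially the same route as the paper: the same decomposition into pieces on which $l_f$ and the lower $s$-density are uniformly quantified, the same estimate covering the graph by sets of diameter $\lesssim_k r$ and converting $\sum_i r_{x_i}^s$ into $\mathcal{H}^s(E)$ via the density bound, and the same outer-regularity/open-set argument for the Luzin part (2); the only difference is that you use the $5r$-covering lemma on shrunken balls where the paper applies the Besicovitch covering theorem, with bounded overlap, directly to the good balls. Two small points, both shared with or already handled in the paper: the argument needs the restriction of $\mathcal{H}^s$ to $E$ to be (locally) finite — the paper's final bound is $c_{n,M,s,t}\,\mathcal{H}^s(E)<\infty$, so this is implicit there too, and you flag it honestly — and in part (1) you should run the covering with the cap on the radii tending to $0$ (as the paper does with its $\delta$) so that you bound $\mathcal{H}^s$ itself and not merely the Hausdorff content; since admissible radii exist at arbitrarily small scales and your bound is uniform in the cap, this is immediate.
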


Note that in the special case when $E$ is a continuous curve and $s=1$, the condition $\liminf_{r\to 0}\mathcal{H}^1(E\cap B(x,r))/r>0$ is automatically satisfied. However, this is not true for $k$-dimensional surfaces and $\H^k$.
%
%
%
%
%
%
\begin{proof}
The image of the countable exceptional set is countable, therefore it is enough to prove that the graph above the non-exceptional points have $\sigma$-finite $\mathcal{H}^s$ measure and that this part of the graph satisfies Luzin condition (2).

For given positive numbers $M$ and $t$, denote $$F=E_{M,t}:=\{ x \in E\,|\, l_f(x) < M,\ \liminf_{r\to 0}{\mathcal{H}^s(E\cap B(x,r))/r^s}> t\}.$$ For part (1), it is sufficient to prove that $\mathcal{H}^s(G_f(F))<\infty$ for any $M$ and $t$. 

The hypothesis implies that for an arbitrary small $\delta$ there exists a covering of $F$ by balls of diameter less than $\delta$, such that for each ball $B=B(x,r)$, $\|f(x)-f(y)\|\leq Mr$ for all $y\in B$, and
$\mathcal{H}^s(E\cap B)\ge tr^s$.

Now let $\{ B_i \}$ (with centers $x_i$ and radii $r_i$) be a subcover given by the Besicovitch Covering Theorem. We note that
$\|f(x_i)-f(y)\| \leq Mr_i$
for every $y\in B_i$. Then, with $c_M=\sqrt{M^2+1}$, the balls around the points $(x_i,f(x_i))$ of diameter $c_M \mbox{diam}(B_i)<c_M\delta$ cover $G_f(F)$, therefore 
\begin{align*}
\mathcal{H}^s_{c_M\delta}(G_f(F)) &\leq \sum_i  (2 c_M\mbox{diam}(B_i))^s \leq c_{M,s,t}\sum_i\mathcal{H}^s(E\cap B_i)\le\\
&\le c_{n,M,s,t}\mathcal{H}^s(E)<\infty.
\end{align*}

For part (2), consider $H_{M,t}:=H\cap E_{M,t}$, and $\eps>0$ arbitrary, and cover $H_{M,t}$ by a relatively open set $U\subset E$ with $\mathcal{H}^s(U)<\eps/c_{n,M,s,t}$. The hypothesis of our lemma remains true with $E$ replaced by $U$, therefore our argument above shows that $\mathcal{H}^s(G_f(H_{M,t}))\le \mathcal{H}^s(G_f(U_{M,t}))\le c_{n,M,s,t}\mathcal{H}^s(U)<\eps$.
This holds for any $\eps$, therefore $\mathcal{H}^s(G_f(H_{M,t}))=0$.
\end{proof}

Theorem \ref{main} combined with Lemma \ref{prop1} gives an alternative 
proof to the positive result in \cite{BaCs06} for $n=1$ mentioned in the beginning of the introduction. Moreover:

	\begin{corollary} \label{fdiff}
	Let $\gamma\subset[0,1]^n$ be a simple curve with finite length, and let $f:\gamma\rightarrow \mathbb{R}$ be a continuous function. Assume $l_f(x)<\infty$ for all but at most countably many $x$ in $\gamma$. Then $f$ is differentiable at positively many points of $\gamma$.
	\end{corollary}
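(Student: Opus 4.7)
The plan is to apply Theorem \ref{main} to the graph $G_f(\gamma)\subset\R^{n+1}$. Since $\gamma$ is a simple curve of finite length, the condition $\liminf_{r\to 0}\mathcal{H}^1(\gamma\cap B(x,r))/r>0$ holds at every point (as noted right after Lemma \ref{prop1}), and $l_f<\infty$ off a countable set by hypothesis; hence Lemma \ref{prop1} with $s=m=1$ gives that $G_f(\gamma)$ has $\sigma$-finite $\mathcal{H}^1$ measure and that $f$ satisfies the Luzin-type condition $\mathcal{H}^1(G_f(H))=0$ whenever $H\subset \gamma$ with $\mathcal{H}^1(H)=0$. Moreover, because $f$ is continuous and $\gamma$ is compact, the graph map $x\mapsto (x,f(x))$ is a homeomorphism onto its image, so $G_f(\gamma)$ is itself a closed continuous simple curve.

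Theorem \ref{main} then produces a set $T\subset G_f(\gamma)$ of positive $\mathcal{H}^1$ measure at every point of which $G_f(\gamma)$ has a tangent line. Let $\pi:\R^{n+1}\to\R^n$ denote projection onto the first $n$ coordinates; its restriction to $G_f(\gamma)$ is the inverse of the graph map, so $T=G_f(\pi(T))$, and if $\mathcal{H}^1(\pi(T))$ were zero the Luzin condition would force $\mathcal{H}^1(T)=0$, a contradiction. Hence $\pi(T)\subset\gamma$ has positive $\mathcal{H}^1$ measure. To finish, I invoke the observation recalled in Section \ref{prelims} that $l_f(x)<\infty$ implies that the contingent of $G_f(\gamma)$ at $(x,f(x))$ contains a non-vertical half-line; so at every $x\in\pi(T)$ with $l_f(x)<\infty$, the tangent line of $G_f(\gamma)$ at $(x,f(x))$ must be non-vertical, which is exactly the condition that $f$ be differentiable along $\gamma$ at $x$. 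Discarding the countable (hence $\mathcal{H}^1$-null) set on which $l_f=\infty$ from $\pi(T)$ preserves positive measure, giving a positive-measure set of differentiability points.

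The main subtlety in this plan is the transfer from tangents on $G_f(\gamma)$ back to differentiability on $\gamma$: without the Luzin condition in Lemma \ref{prop1}, the positive-measure set $T$ of tangent points on the graph could a priori project to a null subset of $\gamma$ and the argument would collapse. Once that step is secured, everything else is a direct chaining of Lemma \ref{prop1}, Theorem \ref{main}, and the contingent/differentiability dictionary from Section \ref{prelims}.
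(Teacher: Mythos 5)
Your proposal is correct and follows essentially the same route as the paper: Lemma \ref{prop1} (parts (1) and (2)) applied with $s=m=1$, then Theorem \ref{main} on the graph curve, then the Luzin-type condition to push the positive-measure tangent set down to $\gamma$, and finally $l_f(x)<\infty$ to rule out vertical tangents. The only difference is that you spell out explicitly the projection/Luzin step and the discarding of the countable exceptional set, which the paper leaves implicit.
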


Indeed, applying part (1) of Lemma \ref{prop1} with $E=\gamma$ and $m=s=1$, we can see that $G_f(E)$ is a simple curve in $\R^{n+1}$ of $\sigma$-finite length. By Theorem \ref{main}, it has a tangent at a subset $D$ with $\mathcal{H}^1(D)>0$. By part (2), this set $D$ lies above a positive subset of $\gamma$. In order to prove that $f$ is differentiable at these points, the only thing we need to check is that the tangent line is not vertical. This follows immediately from the property $l_f(x)<\infty$.

\medskip
We now show how Corollary \ref{fdiff} can be used to prove Theorem \ref{fdiffalld}. The key step in our proof is the following lemma:

\begin{lemma}\label{above}
Let $f:[0,1]^n\rightarrow \mathbb{R}$ be a continuous function, and assume $L_f(x)=\infty$ at almost every $x\in[0,1]^n$. Then there exists a simple curve $\gamma\subset[0,1]^n$ of finite length, such that $L_g(x)=\infty$ for $g:=f|_\gamma$ and for $\H^1$-a.e. $x$ in $\gamma$.
\end{lemma}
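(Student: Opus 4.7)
I would produce $\gamma$ as the graph of a Lipschitz function via a Baire-category argument. Let $\Phi$ denote the (complete) metric space of $1$-Lipschitz maps $\phi:[0,1]\to[0,1]^{n-1}$ with the supremum norm; for each $\phi$ the graph $\gamma_\phi(t):=(t,\phi(t))$ is automatically simple of length at most $\sqrt{n}$. Since $|s-t|\le|\gamma_\phi(s)-\gamma_\phi(t)|\le\sqrt{2}\,|s-t|$, finiteness of $L_{f|\gamma_\phi}$ at $\gamma_\phi(t)$ is equivalent (up to a constant) to finiteness of $L_{F_\phi}$ at $t$, where $F_\phi(t):=f(t,\phi(t))$, so it suffices to find $\phi\in\Phi$ with $L_{F_\phi}=\infty$ at $\mathcal{L}^1$-almost every $t\in[0,1]$.

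For each $N,k,j\in\mathbb{N}$ put
\[
E_{N,k}(\phi):=\{t\in[0,1]:|F_\phi(s)-F_\phi(t)|\le N|s-t|\ \text{whenever}\ |s-t|\le 1/k\},
\]
and $A_{N,k,j}:=\{\phi\in\Phi:\mathcal{L}^1(E_{N,k}(\phi))\ge 1/j\}$. Since $\{t:L_{F_\phi}(t)<\infty\}=\bigcup_{N,k}E_{N,k}(\phi)$, any $\phi$ in the intersection $\bigcap_{N,k,j}(\Phi\setminus A_{N,k,j})$ does the job, and by Baire it suffices to show that each $A_{N,k,j}$ is closed and nowhere dense in $\Phi$. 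Closedness is a Fatou-for-sets argument: if $\phi_m\to\phi$ in $\Phi$, then $F_{\phi_m}\to F_\phi$ uniformly by uniform continuity of $f$, the Lipschitz inequality defining $E_{N,k}$ passes to $\limsup_m E_{N,k}(\phi_m)\subseteq E_{N,k}(\phi)$, and the reverse Fatou inequality for sets then yields $\mathcal{L}^1(E_{N,k}(\phi))\ge\limsup_m\mathcal{L}^1(E_{N,k}(\phi_m))\ge 1/j$. The heart of the argument is nowhere density. Given $\phi_0\in\Phi$ and $\eta>0$, the plan is to partition $[0,1]$ into short intervals $I_\alpha$ of length $h\ll\min(\eta,1/k)$ and, on all but a negligible fraction of them, perturb $\phi_0$ so that the new graph passes through a generic base point $p_\alpha$ with $L_f(p_\alpha)=\infty$ near $(\alpha h,\phi_0(\alpha h))$ and through a witness $q_\alpha$ with $|f(q_\alpha)-f(p_\alpha)|\ge 3N|q_\alpha-p_\alpha|$ at scale much smaller than $1/k$ (both of which are available because $\{L_f=\infty\}$ has full measure and therefore is dense). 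Each such detour produces times $s_\alpha,t_\alpha\in I_\alpha$ with $|F_\phi(s_\alpha)-F_\phi(t_\alpha)|>N|s_\alpha-t_\alpha|$, so a definite-sized neighborhood of $t_\alpha$ drops out of $E_{N,k}(\phi)$; carrying out the perturbations on a dense enough family of intervals drives $\mathcal{L}^1(E_{N,k}(\phi))$ below $1/j$ while keeping $\|\phi-\phi_0\|_\infty<\eta$.

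\textbf{Expected main obstacle.} The technical crux is executing all of these perturbations simultaneously while respecting the $1$-Lipschitz constraint: a genuine detour to a distinct witness requires extra path length, which one accommodates by working instead inside the (still complete) class of $\tfrac12$-Lipschitz functions and reserving the remaining Lipschitz slack for detours of height $O(h)$. One also must verify that the exclusion neighborhoods around the introduced witness pairs together cover enough of $[0,1]$ to push $\mathcal{L}^1(E_{N,k}(\phi))$ strictly below $1/j$, and that the $p_\alpha,q_\alpha$ can all be chosen off the measure-zero exceptional set $\{L_f<\infty\}$ simultaneously---both of these rely essentially on the almost-everywhere hypothesis on $L_f$ and should be where the main technical work lies.
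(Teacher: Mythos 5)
Your framework (reduction to Lipschitz graphs, the identity $\{L_{F_\phi}<\infty\}=\bigcup_{N,k}E_{N,k}(\phi)$, closedness of $A_{N,k,j}$, and the Baire conclusion) is fine, but the nowhere-density step has a genuine gap, and it is not the one you flag. Your perturbation needs the graph to pass through \emph{both} members of a witness pair $p_\alpha,q_\alpha$ with $|f(q_\alpha)-f(p_\alpha)|\ge 3N|q_\alpha-p_\alpha|$. However, $L_f(p_\alpha)=\infty$ gives no control whatsoever on the direction of $q_\alpha-p_\alpha$: it may be nearly parallel to the fibers $\{t=\mathrm{const}\}$, and then no function graph over your fixed axis contains both points, regardless of how much Lipschitz slack you reserve --- the obstruction is the slope required at the tiny scale $d_\alpha=|q_\alpha-p_\alpha|\ll h$, not the height of the detour, so working with $\tfrac12$-Lipschitz maps does not address it. Passing merely near $q_\alpha$ does not help either: to respect the Lipschitz constraint, the graph point closest to $q_\alpha$ is displaced in the $t$-direction by an amount comparable to $d_\alpha$, which is exactly the scale at which $f$ is permitted to oscillate wildly, so continuity of $f$ gives no control of its value there. (For special $f$, say $f(x)=u(x_n)$, a vertical witness can be sheared into an admissible one, but that uses the product structure of $f$; nothing in the hypotheses prevents the directions of large oscillation at a.e.\ point from being transverse to any fixed axis, and your argument offers no way to choose the axis.) A second, related gap: one admissible pair per interval $I_\alpha$ does not remove a ``definite-sized'' neighborhood of $t_\alpha$ relative to $|I_\alpha|$; it only excludes parameters within distance $\sigma_\alpha\approx|s_\alpha-t_\alpha|\approx d_\alpha$ of $t_\alpha$, and $d_\alpha$ cannot be prescribed to be comparable to $h$ (the $\limsup$ in $L_f=\infty$ supplies witnesses only along some unknown sequence of scales tending to $0$). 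Hence driving $\mathcal{L}^1(E_{N,k}(\phi))$ below $1/j$ requires threaded witness pairs at a Vitali-dense family of points of the perturbed graph, which amplifies the directional problem from one point per interval to almost every point.

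For contrast, the paper's proof avoids both difficulties by not restricting to graphs: by Fubini it selects a segment $I$ on which $L_f=\infty$ a.e., takes a Vitali family of disjoint balls $B(x_i,r_i)$ centered on $I$, and replaces each chord $I\cap B_i$ (of length $2r_i$) by a polygon from its endpoints through $x_i$ and the witness $y_i$; since the polygon is free to turn toward $y_i$ in whatever direction it lies and $\|x_i-y_i\|<\epsilon_0 r_i$, the length grows only by the factor $1+\epsilon_0$, and iterating this (after nudging each new segment so that $L_f=\infty$ a.e.\ on it) produces the curve, with $L_g=\infty$ a.e.\ on the limit by the covering property at arbitrarily small scales. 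Your Baire wrapper could in principle replace the paper's infinite iteration, but to make the density step work you would have to either prove that some projection direction admits witnesses of controlled slope at almost every point, or enlarge the complete metric space from graphs to a family of bounded-length curves closed under such polygonal detours; as written, the proposal does not prove the lemma.
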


Assume that Theorem \ref{fdiffalld} is false for some function $f$. Then, by Corollary \ref{lf}, it satisfies the hypotheses of Lemma \ref{above}. This gives us a curve $\gamma$ with the property that for almost all $x$ in $\gamma$, the contingent of the graph of $g$ at $(x,g(x))$ contains at least one vertical direction. Clearly, $l_g\le l_f$ at every point. Therefore, applying Corollary \ref{fdiff} for $\gamma$ and $g$, at positively many points, the contingent is a non-vertical line. This contradiction proves Theorem \ref{fdiffalld}. It remains to prove Lemma \ref{above}:

\begin{proof}
By Fubini's theorem, there a straight line segment $I\subset[0,1]^n$ such that
$L_f=\infty$ at almost every point of $I$.
We denote $I$ as $\gamma_0$ and begin our construction of $\gamma$. Fix a small $\eps_0>0$. At each point $x\in I$ where $L_f(x)=\infty$, for an arbitrary small $r$, there is a point
$y\in [0,1]^n$ with $\|x-y\|< r\eps_0$ and $|f(x)-f(y)|> r/\eps_0$. The balls $B(x,r)$ form a Vitali covering, therefore we can choose countably many disjoint balls $B_i=B(x_i,r_i)$ that cover almost every point $x$ in $I$ with $L_f(x)=\infty$.
For each $i$, replace the part of $\gamma_0$ that is in $B_i$, i.e. the line segment $[a_i,b_i]:=I\cap B_i$, with a polygon from $a_i$ to $b_i$ that has $x$ and $y$ as its vertices (and possibly it also has some other vertices). We define this new curve as $\gamma_1$. We can choose the polygons such that the length of $\ga_1$ is less than $(1+\eps_0)|I|$. 

Suppose that we have constructed $\gamma_j$.  In order to construct $\gamma_{j+1}$, we choose a small $\eps_j$ and perform the same transformation that we applied to $\gamma_0$ to each segment of $\gamma_j$.  However, one obstacle to performing the transformation may be that $L_f(x)<\infty$ on a segment of $\gamma_j$.  To avoid this situation, we can first replace each segment by a polygon, arbitrarily close to the original one, on which $L_f=\infty$ at $\mathcal{H}^1$ almost every point. Now we can transform each segment of $\gamma_j$. It is clear from the geometry that, by choosing the parameters $\eps_j$ and the balls in the Vitali coverings small enough, in the limit, we obtain a simple curve $\ga$ of length at most $\prod_j(1+\eps_j)\,|I|<\infty$. 

The curve $\gamma$ contains countably many exceptional sets of $\H^1$ measure zero (namely, those points that are not covered when we apply Vitali's covering theorem), but all other points $x_0\in\gamma$ are covered by a ball $B(x,r)$ for some $x\in\gamma$ and an arbitrary small $r$, and there is a $y\in B(x,r)\cap\gamma$ and an arbitrary small $\eps$, such that $|f(x)-f(y)|>r/\eps$. Then of course
$\mbox{max}(|f(x_0)-f(x)|/r,|f(x_0)-f(y)|/r)>1/2\eps$, so indeed $L_g(x_0)=\infty$ for $g=f|_\gamma$.
\end{proof}

\section{Higher dimensional surfaces and non-differentiability}\label{higher}

%

We devote the rest of this paper to the construction of an
$n$-dimensional surface in ${\mathbb R}^{n+1}$ of finite ${\mathcal H}^{n}$ measure, for any $n>1$, that does {\it not} have a tangent at any of its points (and also have some additional properties). And we show how this construction can be used to prove Theorem \ref{main2}. 

\begin{proposition}\label{propp}

\begin{itemize}
\item[(1)] For any $n>1$ there exists a set $S \subset \mathbb{R}^{n+1}$ homeomorphic to $[0,1]^n$ that has finite $\mathcal{H}^n$ measure, but it does not have a tangent plane at any of its points. 
\item[(2)]Furthermore, for any $n\ge 1$ and for any gauge function $g$, there exists a set $S \subset \mathbb{R}^{n+1}$ and a homeomorphism $h: [0,1]^n \rightarrow S$ that satisfies $l_h(x)<\infty$ at every $x\in[0,1]^n$ except at the points of a set of zero $\mathcal{H}^{g}$ measure, and $h$ is not differentiable anywhere.
\end{itemize}
\end{proposition}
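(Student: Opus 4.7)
My plan is to build $S$ in both parts as the Hausdorff limit of a sequence of piecewise-linear topological $n$-disks $S_0, S_1, \ldots \subset \mathbb{R}^{n+1}$, where $S_{j+1}$ is obtained from $S_j$ by replacing many small flat PL patches by ``bumps''. Start with $S_0 = [0,1]^n \times \{0\}$. At stage $j$ pick a scale $r_j$, choose an $r_j$-dense disjoint family of small PL patches in the flat regions of $S_j$, and replace each by a PL topological $n$-disk sharing the boundary of the patch. Each such bump is designed so that its faces carry $n+1$ outward normals spanning $\mathbb{R}^{n+1}$---for example, a pyramid with additional lateral protrusions; for $n=1$, a simple zig-zag. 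Every modification replaces an $n$-disk by an $n$-disk with matching boundary, preserving PL homeomorphism type; provided $r_j \to 0$ fast enough that $\sum_j d_H(S_{j+1}, S_j) < \infty$ and bumps at different stages remain spatially separated, the limit $S$ is homeomorphic to $[0,1]^n$ by a standard PL-limit argument, yielding a natural parameterization $h: [0,1]^n \to S$.

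For part (1) the bumps have bounded aspect ratio (height $\asymp r_j$), and the number $N_j$ of stage-$j$ bumps is tuned so that $N_j r_j^n \le 2^{-j}$. Each bump contributes $\lesssim r_j^n$ to the area, so $\mathcal{H}^n(S) < \infty$. For the no-tangent property at an arbitrary $p \in S$: given $\rho > 0$, pick $j$ with $r_j \ll \rho$; the $r_j$-denseness places a stage-$j$ bump $T$ inside $B(p,\rho)$, and the PL segments in $S$ from $p$ to the vertices of $T$ realize the $n+1$ independent direction vectors carried by $T$. Hence $\mathrm{contg}_S\,p$ spans $\mathbb{R}^{n+1}$, is not contained in any $n$-plane, and no tangent plane at $p$ exists.

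For part (2) I drop the area budget and use two kinds of bumps at each stage. ``Low'' bumps of bounded aspect ratio are placed densely enough that every $x \in [0,1]^n$ is near a low bump at arbitrarily small scales, which as in part (1) ensures that the contingent of $S$ at $h(x)$ is not contained in any $n$-plane, so $h$ is not differentiable at $x$. ``Tall'' bumps of aspect ratio $a_j \to \infty$ are placed only over a sequence of subsets $A_j \subset [0,1]^n$ chosen so that the set $E := \{x : x \in A_j \text{ for all large } j\}$ satisfies $\mathcal{H}^g(E) = 0$, via a Borel--Cantelli-type selection tuned to $g$ (for instance $A_j$ is a union of many well-separated small regions whose diameters are controlled against $g$ and which shift across stages, so that only points in a prescribed fractal $E$ remain covered by tall-bump regions eventually). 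For $x \notin E$, infinitely many ``quiet'' stages occur near $x$ at which no tall bump is present, and on the scales associated with such stages the geometry of $h$ near $x$ is governed by finite-aspect low bumps, yielding $l_h(x) < \infty$.

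The main obstacle is enforcing the relevant directional property at \emph{every} point, not merely almost every point, while respecting the area budget for (1) or the $\mathcal{H}^g$ control on the exceptional set for (2). For (1), a point $p \in S$ lying in the flat complement of every finite-stage bump must still inherit a spanning contingent from arbitrarily deep bumps, which requires a careful geometric design so that each bump, even viewed from outside, supplies $n+1$ straight-line independent directions via PL segments of $S$; this in turn demands a separation condition between bumps of different stages. For (2), the delicate point is that slopes from deep tall bumps persist through nested PL structure, so the parameters $a_j$, the density of low bumps, and the shifting positions of the tall-bump sets $A_j$ must be coordinated so that $x \notin E$ produces infinitely many scales on which the local geometry of $h$ near $x$ is dominated by fixed finite-aspect low-bump faces, giving the required $l_h(x) < \infty$.
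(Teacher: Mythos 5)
The decisive gap is in part (1), and it is quantitative: your bumps have \emph{bounded aspect ratio}, so a stage-$j$ bump has height $\asymp r_j$, base diameter $\asymp r_j$, and area $\asymp r_j^n$. Then $r_j$-density of the stage-$j$ bumps forces $N_j\gtrsim r_j^{-n}$, i.e.\ $N_jr_j^n\gtrsim 1$, which contradicts your area budget $N_jr_j^n\le 2^{-j}$; conversely, under that budget, thickening the stage-$j$ bumps to their $\lambda_jr_j$-neighborhoods with $\lambda_j\to\infty$ chosen so that $\lambda_j^n2^{-j}$ is summable shows (Borel--Cantelli) that almost every base point eventually avoids all these neighborhoods, and from such a point every nearby piece of $S$ is seen within an angle $\lesssim r_j/(\lambda_jr_j)=1/\lambda_j\to 0$ of the horizontal plane, so $S$ \emph{does} have a tangent plane there. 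A telling symptom is that your argument for (1) nowhere uses $n>1$, while the statement is false for $n=1$ (a simple arc of finite $\mathcal H^1$ measure has a tangent a.e., cf.\ Theorem \ref{main}), so some such failure is unavoidable. The paper's key idea is exactly the feature that ``bounded aspect ratio'' excludes: on every cell $Q$ it erects a \emph{thin tall column} over a concentric cube $Q^*$ with $\ell(Q^*)\ll\ell(Q)$ and height $\ell(Q)$ (plus a pyramid of very small angle $\alpha(Q)$); the column is seen at a definite angle from every point of the cell at scale $\ell(Q)$, killing tangency everywhere, while its lateral area $\approx 2(n+1)\ell(Q^*)^{n-1}\ell(Q)$ is negligible precisely when $n>1$.

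Part (2) has two further gaps. First, ``$\mbox{contg}_S(h(x))$ is not contained in an $n$-plane'' does not imply that $h$ is non-differentiable at $x$: a degenerate derivative (rank $<n$, e.g.\ $Dh(x)=0$) is perfectly compatible with a spanning contingent. The paper excludes this by building $h$ explicitly (vertical projection onto the pyramid, a distance-expanding perspective projection onto the column) and proving the lower bound $\|x-y\|\lesssim\|h(x)-h(y)\|$, after which a direction normal to the image of a putative derivative gives a contradiction; your ``natural parameterization from a PL-limit argument'' comes with no such bound. Second, the claim $l_h(x)<\infty$ off $E$ is not substantiated: if your ``low'' bumps are to destroy tangency at every scale by themselves, their slopes must be bounded below, and then the cumulative distortion of the infinitely many stages acting near $x$ is uncontrolled---``quiet stages'' for the tall bumps say nothing about the oscillation contributed by low bumps of all other stages. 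In the paper the ubiquitous modifications (the pyramids) have summable angles $\sum\alpha(Q)<\infty$, so $h_k\circ h_j^{-1}$ stays uniformly Lipschitz away from the columns, the genuinely steep parts are confined to the tiny cubes $Q^*$, the exceptional set is the set of points whose images lie in columns infinitely often (made $\H^g$-null by choosing $\sum g(c\,\diam Q^*)<\infty$), and $l_h(x)<\infty$ off it is verified along the specific scales $r\asymp\ell(Q_k)/(\alpha L)$ using the comparability of neighbouring cubes. Without these quantitative inputs your scheme does not close.
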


Proposition \ref{propp} implies Theorem \ref{main2}. Indeed, consider the sets
\begin{align*}
D_{v}:= \{x \in [0,1]^n ~|~ v \cdot h \mbox{ is differentiable at } x\} \end{align*}
where $v\in\mathbb{R}^{n+1}$ with $\|v\|=1$, $h$ is the nowhere differentiable homeomorphism in Proposition \ref{propp}, and $v \cdot h$ is the usual scalar product.
If every $D_{v}$ has positive $\mathcal{L}^n$ measure, then there are $n+1$ linearly independent directions $v_i$ such that $D:=\bigcap_i D_{v_i}\neq\emptyset$, and then $h$ would be differentiable at the points of $D$. Therefore,  there exists $v$ such that $f:=v \cdot h: \mathbb{R}^n \rightarrow \mathbb{R}$ is not differentiable at a.e. $x$. And since the scalar product is Lipschitz with Lipschitz constant 1, $l_f(x)\le l_h(x)$ at every $x$.

\medskip
Now we turn to the proof of Proposition \ref{propp}.

\subsection{Construction of $S$}\label{S}
We will construct our surface for $n=2$ for simplicity. An analogous construction will work in any dimension. 

Let $S_0$ be a (closed) planar unit square in $\mathbb{R}^3$. For any $j\ge 0$, we will define a surface $S_j\subset\R^3$ inductively, and then we will take the limit of these surfaces. Each surface $S_j$ will be a union of countably many ``edges'' (straight line segments) and countably many open ``faces'' (2-dimensional planar polygonal regions whose boundary line segments are edges of $S_j$). Naturally, the edges of $S_0$ are the four edges of the square, and it has one face, the interior of the square.

Suppose that we already defined $S_j$ for some $j\ge 0$. First we decompose the faces of $S_j$ into countably many non-overlapping closed squares such that for any two squares $Q,Q'$, if the intersection $3Q\cap 3Q'$ is non-empty, then they are of comparable sizes. We will use the notation $\ell(Q)$ for the sidelength of a square $Q$. We also assume that, for each of our squares $Q$, $\ell(Q)\ll\mbox{dist}(Q,S_j\setminus F)$, where $F$ is the face of $S_j$ to which $Q$ belongs. In particular, $Q$ has distance $\gg \ell(Q)$ from the edges of $F$.

Next, we modify the surface $S_j$ in the interior of the squares $Q$ to obtain $S_{j+1}$. Consider a square $Q$ of our construction, and let $Q^*$ be the square centered at the center of $Q$ with sidelength $\ell(Q^*) \ll \ell(Q)$. We place a square pyramid $P$ of angle $\alpha(Q)\ll 1$ onto $Q$, and a square column $C$ of height $\ell(Q)$ onto $Q^*$, respectively, as in Figure 1. Then we replace $\mbox{int}\,Q$ by $\partial(C\cup P)\setminus Q$. We do this inside each of our squares $Q$, with sufficiently small $\ell(Q^*), \alpha(Q)$ that we will specify later. The resulting surface is $S_{j+1}$.

$$
\begin{array}{c} \mbox{Figure 1}\\
\includegraphics[scale=0.4]{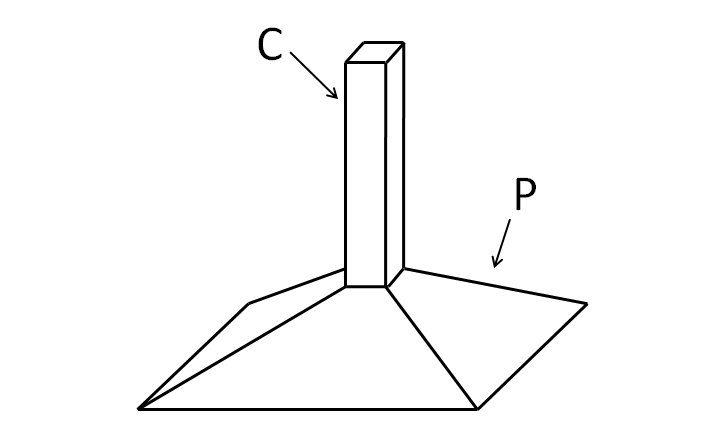}
\end{array}
$$

Note that each square $Q$ is replaced by a piecewise affine surface that is in the $\ell(Q)$-neighborhood of $Q$. Using this and the fact that $Q$ has distance $\gg\ell(Q)$ from $S_j\setminus F$, it is easy to see that $S_{j+1}$ is homeomorphic to $S_j$. Consequently, each $S_j$ is homeomorphic to $S_0=[0,1]^2$.
Also, it is clear from the geometry that $S_j$ converges to a surface $S$ homeomorphic to $[0,1]^2$. Later we will describe a homeomorphism $h_j: [0,1]^2 \rightarrow S_j$ explicitly and show that the homeomorphisms $h_j$ converge to a homeomorphism $h: [0,1]^2\rightarrow S$ that satisfy part (2) of our proposition. But first let us show part (1).

\subsection{Finite $\mathcal{H}^{2}$ measure}
Let $c>1$ be arbitrary. First note that for each cube $Q$ and its column $C$ and pyramid $P$, $\mathcal{H}^{2}(\partial(C))< 6\ell(Q^*)\ell(Q)$, and that $\mathcal{H}^{2}(\partial P \setminus Q)-\H^2( Q )$ is small if $\alpha(Q)$ is small. Therefore, by choosing $\alpha(Q)$ and $\ell(Q^*)$ small enough (so small that, even $\sum 6\ell(Q^*)\ell(Q)$ is small where the summation is taken over all squares $Q$ of all steps), we can ensure that $\mathcal{H}^{2}(S_j)<c$ for any $j$. 

In order to ensure that not only the surfaces $S_j$, but also the limit surface $S$ has finite $\mathcal{H}^{2}$ measure, inductively we define a sequence of positive numbers $\delta_j$ tending to zero, and a sequence of open sets $G_j$ such that $S_j\subset G_j$ and $\mathcal{H}^{2}_{\delta_j}(G_j)<c$. We can also assume that $G_0\supset \mbox{cl}\,G_1\supset G_1\supset \mbox{cl}\,G_2\supset G_2\dots$. Indeed, if $S_j$, $\delta_j$, $G_j$ have been already defined for a given $j$, then in the construction of $S_{j+1}$ we can choose the squares $Q$ so small that their sidelength is less then their distance from $G_j^c$. Then $S_{j+1}\subset G_j$; and we can choose $G_{j+1}$, $\delta_{j+1}$ with $\mbox{cl}\,G_{j+1}\subset G_j$ and $\mathcal{H}^{2}_{\delta_{j+1}}(G_{j+1})<c$.

Then $S\subset\bigcap_j \mbox{cl}\,G_j=\bigcap_j G_j$,  $\mathcal{H}^{2}_{\delta_j}(S)\le \mathcal{H}^{2}_{\delta_j}(G_j)<c$ for any $j$, and consequently, $\mathcal{H}^{2}(S)\le c$.

\begin{remark}{\rm We used the assumption $n>1$ in this proof. Indeed, in dimension $n$, the columns have surface area bounded by $2(n+1)\ell(Q^*)^{n-1}\ell(Q)$, which we can make small by choosing $\ell(Q^*)$ small enough, only if $n>1$.

We will not use this assumption anywhere else. We can do the same construction also for $n=1$, it will give us a nowhere differentiable $S$ with
$\mathcal{H}^{1}(S)=\infty$. As we have seen, part (1) of Proposition \ref{propp} is false for $n=1$. But part (2) will remain true; we will use the construction \ref{S} also in dimension $n=1$ when we prove part (2).}
\end{remark}

\subsection{Edges and faces}
In our construction, each face of $S_j$ is divided into countably many squares, and then each square is replaced by a piecewise affine surface, to obtain $S_{j+1}$. Naturally, we define the faces of $S_{j+1}$ as the faces of these piecewise affine surfaces; and it has two type of edges: firstly, the edges of $S_j$ are also edges of $S_{j+1}$, and secondly, the edges of our countably many piecewise affine surfaces. The points of the first type of edges do not belong to the closure of any face of $S_{j+1}$; while, at the points of the second type, at least two faces of $S_{j+1}$ meet, in a positive angle.

Note that all edges of $S_j$ are preserved by our construction: they are also edges of the surfaces $S_{j+1}, S_{j+2},\dots$, and consequently, they belong to the surface $S$.

\subsection{No tangents}
Next, we show that $S$ does not have a tangent plane at any of its points. First, consider an edge of $S_j$ for some $j\ge 1$, and assume that it is not on an edge of $S_{j-1}$. Then two faces of $S_j$ meet at this edge in a positive angle, so clearly $S_j$ does not have a tangent at the points of this edge. This non existence is preserved during our construction because, on both faces, there is a sequence of squares of $S_{j+1}$ converging to this edge, and the edges of these squares in $S_{j+1}$ will be preserved for every step afterwards. That is, for any point $x$ in any edge, ${\mbox contg}_S(x)$ contains all faces that meet at $x$.  

Now consider an $x\in S$ that does not belong to any edge of any $S_j$. Then, there is an arbitrary small $r$, such that within the $r$-neighborhood of $x$, there is a $Q\subset S_j$ of length comparable to $r$. This follows from our choice of the side lengths of the squares $Q$: if $Q$ is a square of our construction, 
then the squares $Q'$ used in the next step on the surface of the pyramid $P=P(Q)$ and the column $C=C(Q)$ all have side lengths $\ell(Q')\ll\ell(Q)$. And in the next steps $\ell(Q'')\ll\ell(Q')$, etc, therefore during the whole construction, we stay within a neighborhood $\ll \ell(Q)$. This also shows that on an arbitrary small scale $r$, next to $x$ in a distance comparable to $r$, $S$ ``looks like'' Figure 1, and not like a plane: it is not within a small angle of any plane. This finishes the proof of part (1).

\subsection{Definition of $h$}
Let us now define a homeomorphism $[0,1]^n\rightarrow S\subset\R^{n+1}$ which will satisfy the requirements in part (2). We will use the construction of the surface $S$ and its properties described in 4.1 and 4.3-4.4.

First, for each $j$, we define a homeomorphism from $S_0=[0,1]^n\subset[0,1]^{n+1}$ onto $S_j$.  Let $h_0$ be the identity. We define $h_j$ as $f_j \circ h_{j-1}$ where $f_j:S_{j-1}\rightarrow S_j$ is a homeomorphism.
We let $f_j$ to be the identity on $S_{j-1}\cap S_j$, and construct $f_j$ on each cube $Q \subset S_{j-1}$ individually. Let $\theta$ denote the direction orthogonal to $Q$.
\begin{enumerate}
\item For each $y \in Q\setminus Q^*$, let $f_j$ be the projection of $y$ onto $\partial P$ in direction $\theta$.
\item The cube $Q^*$ is first mapped by a translation in the direction $\theta$ onto the cube $Q'$ defined by $\partial Q'=\partial C\cap\partial P$, and then the interior of this cube $Q'$ is mapped to $\partial C\setminus\partial P$ by a perspective projection from a point far below the center of $Q^*$ in direction $\theta$. 
\end{enumerate}
$$
\begin{array}{c} \mbox{Figure 2}\\
\includegraphics[scale=0.4]{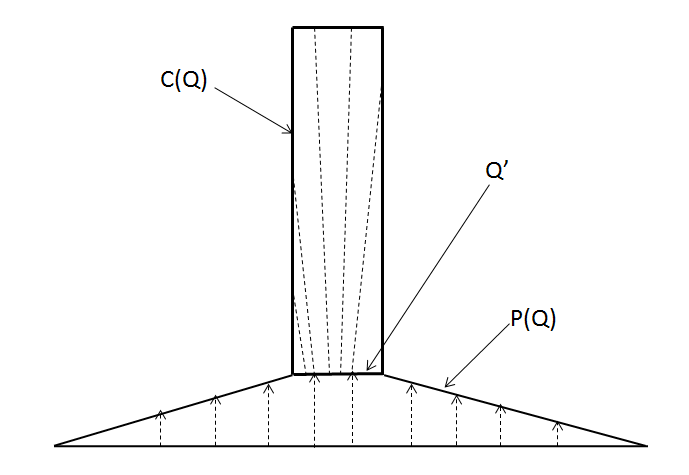}
\end{array}
$$
 We note that the point used for the perspective projection can be defined far enough from $Q^*$ so that the projection increases distances between points.  Since the $S_j$ are polygonal, $f_j$ is piecewise smooth as well as $h_j:=f_j \circ h_{j-1}$ for $j\ge 1$.  

From section 4.1, we know that $f_j(Q)$ lies within an $\ell(Q)$ neighborhood of $Q$. Therefore $c_j:=\|h_j-h_{j-1}\|_\infty\lesssim\sup l(Q)$, where the supremum is taken over all cubes of $S_{j-1}$. In 4.4, we have also seen that $c_j$ tends to zero (at least) as fast as a geometric sequence, therefore the continuous functions $h_j$ converge uniformly to a continuous function $h$. We need to show that $h$ is injective. 

First note that, for any face $F$ of $S_{j-1}$ and for any two $x,y\in F$, $\|f_j(x)-f_j(y)\|\gtrsim\|x-y\|$. Now let $x,y\in S_0$ be arbitrary, and consider the first $j$ for which $h_{j-1}(x),h_{j-1}(y)$ are not inside the same face of $S_{j-1}$. If $x$ is on an edge of $S_{j-1}$ then $h(x)=h_{j-1}(x)$, and if it is covered by one of our cubes $Q$, then $\|h_{j-1}(x)-h_j(x)\|\lesssim \ell(Q)$ and $\|h_j(x)-h(x)\|\ll \ell(Q)$. We also know that $Q$ has distance $\gg\ell(Q)$ from $S_{j-1}\setminus F$, where $F$ is the face that $Q$ belongs to. In particular, it has distance $\gg\ell(Q)$ from $y$. The same estimates are true for $y$. Putting these estimates together, we can see not only that $h$ is injective, but:
$$\|x-y\|\lesssim\|h(x)-h(y)\|.$$

This also shows that $h$ is not differentiable at any point $x$. Indeed, recall that $S$ cannot be well-approximated by a plane at any point, 
$$\|\theta\cdot (h(x)-h(y))\|=o(\|h(x)-h(y)\|)$$ fails for any direction $\theta$, and of course if $h$ is differentiable at $x$ with derivative $L:\mathbb{R}^n \rightarrow \mathbb{R}^{n+1}$, for $\theta \perp \mbox{Im }L$ we would get
$|\theta\cdot (h(x)-h(y))|=o(\|x-y\|)\lesssim  o(\|h(x)-h(y)\|)$.

\subsection{Lower scaled oscillation}
Consider a cube $Q$ on $S_{j}$, its middle cube $Q^*$, and the inverse image
$Q^{**}:=h_{j}^{-1}(Q^*)$. Let $E\subset S_0$ denote the set of points that belong to a $Q^{**}$ for infinitely many $j$. Since $\diam(Q^{**})\le c\,\diam(Q^*)$ for some absolute constant $c$,
%
by choosing the cubes $Q^*$ in our construction small enough (so small that, even $\sum g(c\,\diam(Q^*))<\infty$ where $g$ is the gauge function and the summation is taken over all cubes $Q^*$ in all steps), we indeed get $\H^g(E)=0$.

Now consider an $x$ in the complement of $E$. Then there is a $j$ such that $h_{k}(x)$ does not belongs to a column in $S_{k}$ for any $k\ge j$. Assume first that $x$ is not mapped to an edge of any $S_k$. 

In this case we know that $h_j$ is smooth around $x$, in particular, it is Lipschitz with some Lipschitz constant $L$ (that depends on $x$ and $j$). For $k\ge j$, let $Q_k$ denote the cube in $S_k$ that $h_{k}(x)$ belongs to. Then $h_k\circ h_j^{-1}$ is Lipschitz with
Lipschitz constant $\alpha:=1+\sum\alpha(Q)$ on $h_j\circ h_k^{-1}(Q_k)$, where $\alpha(Q)$ is the angle of the pyramid $P(Q)$ as in 4.1, and the summation is taken over all cubes of all steps so that $\alpha$ does not depend on $k$.

Recall that for any other cube $Q_k'$ in $S_k$, if $3Q_k\cap 3Q_k'\neq\emptyset$ then they are of comparable sizes. Therefore, all the cubes that meet $B_k:= B(h_k(x),\ell(Q_k))$ in $S_k$ are comparable to $Q_k$, therefore the oscillation of $h\circ h_k^{-1}$ on $B_k$ is comparable to $\ell(Q_k)$, in particular, it is bounded by $C\ell(Q_k)$ for some absolute constant $C$.

Now we are ready to prove that $l_h(x)<\infty$. For any $k$, consider the ball
$B(x,r)$ for a small $r$. This ball is mapped into the ball $B(h_j(x),Lr)$ by $h_j$,
then into $B(h_k(x),\alpha Lr)$ by $h_k$. Therefore, with the choice $r:=\ell(Q_k)/\alpha L$, the image of $B(x,r)$ through $h_k$ is inside $B_k$, and then the image of $h$ is inside the
$C\ell(Q_k)$-neighborhood of $B_k$. That is, 
$$\sup_{B(x,\ell(Q_k)/\alpha L)}\|h(x)-h(y)\|\lesssim\ell(Q_k).$$
Letting $k\to\infty$ we obtain $l_h(x)<\infty$.

\medskip
Finally, when $h_k(x)$ is on an edge of $S_k$ for some $k$, suppose that $k$ is the first index when this happens. Then it is clear that $l_{h_k}(x)<\infty$, and as we proceed with the construction, we only modify our functions on cubes $Q$ whose distance from $h_k(x)$ is $\gg \ell(Q)$, therefore in a neighborhood of $x$, $|h(y)-h(x)|$ is comparable to $|h_k(y)-h_k(x)|$.

\bibliographystyle{amsplain}

\end{document}